\documentclass[final]{siamltex}

\setlength\oddsidemargin {1.5cm}
\setlength\evensidemargin{1.5cm}

\topmargin 10mm

\usepackage{amssymb}
\usepackage{amsmath}
\usepackage{graphicx}
\usepackage{bm,bbm}
\usepackage{comment}
\usepackage{rotating}
\usepackage{color}
\usepackage{multirow}
\usepackage{multicol}
\usepackage{numprint}
\usepackage{overpic}


\usepackage{mathrsfs}

\usepackage[numbers,sort]{natbib}

\newtheorem{remark}{Remark}[section]

\newcommand{\EOD}{\end{document}}



\newcommand{\fs}{f^s}

\newcommand{\nv}{\mathbf{n}}
\newcommand{\xv}{\mathbf{x}}
\newcommand{\vv}{\mathbf{v}}
\newcommand{\Ev}{\mathbf{E}}

\newcommand{\C}   {C}
\newcommand{\Cs}  {\C^s}

\newcommand{\us}{u^s}

\newcommand{\dvv}{d\mathbf{v}}

\newcommand{\ABS}[1]{|#1|}
\newcommand{\Abs}[1]{\big|#1\big|}

\newcommand{\REAL}{\mathbbm{R}}

\newcommand{\vphi}{\varphi}

\newcommand{\vs}{v}

\newcommand{\Es}{E}
\newcommand{\Esk}{E_{k}}
\newcommand{\Esh}{\widehat{E}}

\newcommand{\ms}{m^s}
\newcommand{\qs}{q^s}

\newcommand{\dx}{dx}
\newcommand{\dv}{dv}
\newcommand{\dt}{dt}

\newcommand{\NN}{N}

\newcommand{\abs}[1]{\left|#1\right|}

\newcommand{\norm}[1]{|\!|#1|\!|}
\newcommand{\Norm}[1]{\left|\!\left|#1\right|\!\right|}

\def\trait #1 #2 #3 {\vrule width #1pt height #2pt depth #3pt}
\def\fin{\hfill
        \trait .3 5 0
        \trait 5 .3 0
        \kern-5pt
        \trait 5 5 -4.7
        \trait 0.3 5 0
\medskip}
\newcommand{\ENDPROOF}{\fin}


\newcommand{\NL}{N_L}



\newcommand{\Csnk}  {\Cs_{n,k}}
\newcommand{\Csnkp} {\Cs_{n',k'}}
\newcommand{\Csnkpp}{\Cs_{n'',k''}}

\newcommand{\Zp}{\mathbbm{Z}^{+}}
\newcommand{\ZZ}{\mathbbm{Z}}

\newcommand{\LL}{\Lambda}
\newcommand{\LS}{\Lambda_S}
\newcommand{\LG}{\Lambda_L}
\newcommand{\LH}{\Lambda_H}
\newcommand{\LF}{\Lambda_F}
\newcommand{\LN}{\Lambda^N}

\newcommand{\LSN}{\Lambda_S^N}

\newcommand{\LFN}{\Lambda_F^N}

\newcommand{\PN} {\mathcal{P}^N\!}
\newcommand{\PFN}{\mathcal{P}^{N}_{F}\!}
\newcommand{\PSN}{\mathcal{P}^{N}_{S}\!}

\newcommand{\ENs}{\Es^N}

\newcommand{\NH}{N_H}
\newcommand{\NF}{N_F}
\newcommand{\NS}{N_S}


\newcommand{\LNs}{\mathcal{L}^N}

\newcommand{\HNs}{\mathcal{H}^N}

\newcommand{\FNs}{\mathcal{F}^N}

\newcommand{\XNs}{\mathcal{X}^N}

\newcommand{\YNs}{\mathcal{Y}^N}

\newcommand{\Ox}{\Omega_x}
\newcommand{\Ov}{\Omega_\vs}

\newcommand{\nk}  {(n,k)}
\newcommand{\nkp} {(n',k')}
\newcommand{\nkpp}{(n'',k'')}


\newcommand{\CNsnk}  {\Cs^N_{n,k}}
\newcommand{\CNsnkp} {\Cs^N_{n',k'}}
\newcommand{\CNsnkpp}{\Cs^N_{n'',k''}}

\newcommand{\fNs}{f^{s,N}}
\newcommand{\fsz}{\fs_0}

\newcommand{\Fv} {\mathbf{F}}
\newcommand{\FNv}{\mathbf{F}^{N}}


\newcounter{numbs}
\newcounter{numbi}
\newcounter{numbii}

\newcommand{\SPAN}{\textsf{span}}

\newcommand{\rhoN}{\rho^N}

\renewcommand{\fs} {f}
\renewcommand{\fNs}{f^N}
\renewcommand{\Cs}{\C}

\renewcommand{\us}{u}
\renewcommand{\ms}{m}
\renewcommand{\qs}{q}

\newcommand{\gs}{g}

\newcommand{\xp}{x'}
\newcommand{\vp}{v'}
\newcommand{\dxp}{\dx'}
\newcommand{\dvp}{\dv'}
\newcommand{\LFs}{\LF\backslash{\{0\}}}
\newcommand{\LFNs}{\LFN\backslash{\{0\}}}
\newcommand{\KK}{K}
\newcommand{\KN}{K^N}

\newcommand{\ENsk}{\Esk^{N}}

\newcommand{\dS}{dS}
\newcommand{\DIVxv} {\textrm{div}}
\newcommand{\GRADxv}{\nabla}

\newcommand{\SNs}{\mathcal{S}^N}

\newcommand{\RN}{R^N}

\newcommand{\vmax}{\vs_{\max}}
\newcommand{\vmin}{\vs_{\min}}

\newcommand{\email}[1]{\textsf{e-mail: }#1}

\newcommand{\gN}{\gs^N}

\title{Convergence of spectral discretizations of the Vlasov-Poisson system}

\author{
  G. Manzini                  \footnotemark[1]
  \and D. Funaro              \footnotemark[2]
  \and G.~L. Delzanno         \footnotemark[3]
}

\begin{document}

\maketitle
\renewcommand{\thefootnote}{\fnsymbol{footnote}}
\footnotetext[1]{
  T-5 Applied Mathematics and Plasma Physics Group, 
  Los Alamos National Laboratory, Los Alamos, New Mexico, USA
  \email{gmanzini@lanl.gov}
}
\footnotetext[3]{
  Dipartimento di Scienze Fisiche, Informatiche e Matematiche,
  Universit\`a degli Studi di Modena e Reggio-Emilia, 
  Modena, Italy
  \email{daniele.funaro@unimore.it}
}
\footnotetext[3]{ 
  T-5 Applied Mathematics and Plasma Physics Group, 
  Los Alamos National Laboratory, Los Alamos, New Mexico, USA
  \email{delzanno@lanl.gov}
}
\renewcommand{\thefootnote}{\arabic{footnote}}

\begin{abstract}
  We prove the convergence of a spectral discretization of the
  Vlasov-Poisson system.
  The velocity term of the Vlasov equation is discretized using either
  Hermite functions on the infinite domain or Legendre polynomials on
  a bounded domain.
  The spatial term of the Vlasov and Poisson equations is discretized
  using periodic Fourier expansions.
  Boundary conditions are treated in weak form through a penalty type
  term, that can be applied also in the Hermite case.
  As a matter of fact, stability properties of the approximated scheme
  descend from this added term.
  The convergence analysis is carried out in details for the $1D$-$1V$
  case, but results can be generalized to multidimensional domains,
  obtained as Cartesian product, in both space and velocity.
  The error estimates show the spectral convergence, under suitable
  regularity assumptions on the exact solution.
\end{abstract}

\begin{keywords}
  Hermite spectral method, Legendre spectral method, Vlasov equation,
  Vlasov-Poisson system
\end{keywords}



\raggedbottom

\section{Introduction}
\label{sec:introduction}

The Vlasov-Maxwell equations, or their electrostatic equivalent
Vlasov-Poisson, describe the microscopic dynamics of a collisionless,
magnetized plasma combined with Maxwell's equation for the
electromagnetic field \cite{goldston,fitzpatrick}.
These equations are strongly coupled: the plasma provides the sources
(density and currents) for the Maxwell equations, while the
electromagnetic field moves the plasma particles via the Lorentz
force.
These equations have an intrinsic complexity, due to the fact
that they are defined in a space of six dimensions.
They are also extremely multiscale: plasma phenomena span a multitude
of spatial and temporal scales, with several orders of magnitude of
scale separation between microscopic and system scales.
Indeed, the development of methods that can describe the large-scale
dynamics of magnetized plasmas while retaining the necessary
microscopic physics is the holy grail of computational plasma physics.

\par\smallskip

There are three major classes of numerical methods for the solution of
the Vlasov-Maxwell equations, which differ by how the plasma
distribution function (i.e. phase-space density) is treated.
In the Particle-In-Cell (PIC) technique the plasma is described by
macroparticles that move through a computational mesh
\cite{birdsall,hockney}.
The Eulerian-Vlasov approach introduces a six-dimensional mesh in
space and velocity coordinates and defines the distribution function
on the mesh \cite{cheng&knorr,filbet}.
Finally, transform (spectral) methods expand the velocity part of the
distribution function in basis functions to obtain a system of 
differential equations for the coefficients of the expansion
\cite{armstrong}. 
These moment equations are defined in configuration space.
\par\smallskip

PIC is the method of choice in the plasma physics community because of
its relative simplicity, robustness and efficient parallelization on
modern computer architectures.
It is however a low-order method: reducing the well-known statistical
noise associated with the macroparticles can require a prohibitive
amount of computational resources.
Spectral methods, on the other hand, can be very accurate as one can
take advantage of the very high rate of convergence of the method,
in presence of regular solutions.
These techniques were popular in the early days of computational plasma physics,
where the Hermite or Fourier basis functions where used
\cite{armstrong}, but have not led to the development of a set of
widely used numerical codes for the plasma physics community.
Nevertheless, in recent years there has been a renewed interest for
spectral methods for Vlasov-based models
\cite{%
Camporeale-Delzanno-Bergen-Moulton:2015,%
Manzini-Delzanno-Vencels-Markidis:2016,%
Camporeale-Delzanno-Lapenta-Daughton:2006,%
Delzanno:2015,%
parker,loureiro,viriato%
}.
This is in part driven by the fact that with a suitable choice of the
spectral basis, the low-order moments of the expansion are related to
the typical fluid moments (density, momentum, energy, ...) of the
plasma.
Thus one can describe the plasma macroscopically with a few moments,
while the microscopic physics can be retained by adding more moments
to the expansion \cite{vencels2015spectral}.
This can be done only in some parts of the computational domain, as
necessary.
Thus, from a computational point of view, spectral methods might offer
an optimal way to perform large-scale simulations including
microscopic physics~\cite{Vencels:2016}. 

\par\smallskip

With this premise, this paper deals with the numerical analysis of
spectral methods for the Vlasov-Poisson equations, proving for the
first time the convergence and stability properties of the method when
the spectral basis in velocity space consists of either
Symmetrically-Weighted (SW) Hermite functions or Legendre polynomials,
combined with a Fourier discretization in space.
To this end, we consider the \emph{Vlasov-Poisson} system of equations
for the electron distribution function $\fs(\xv,\vv,t)$ (with charge
$\qs$ and mass $\ms$), and the electric field $\Ev(\xv,t)$:
\begin{align}
  \frac{\partial\fs}{\partial t} 
  + \vv\cdot\nabla_{\xv}\fs
  + \frac{\qs}{\ms}\Ev\cdot\nabla_{\vv}\fs = 0,
  \qquad
  \textrm{in~}\Omega,\,t\in[0,T[,
  \label{eq:Vlasov:3V:def}
  \\[0.5em]
  \nabla_{\xv}\cdot\Ev = \rho ~~\textrm{where}~~ \rho = n_i + \frac{\qs}{\epsilon_0}\int_{\Ov}\fs\dvv
  \qquad\textrm{in~}\Ox,\,t\in[0,T[.
  \label{eq:Poisson:3D:def}
\end{align}
Equations~\eqref{eq:Vlasov:3V:def}-\eqref{eq:Poisson:3D:def} are defined on
the \emph{six} dimensional phase space and in the time range
$t\in[0,T[$, for a given $T$.
\medskip

In equation~\eqref{eq:Poisson:3D:def} the ions are a static
neutralizing background of density $n_i$ and $\epsilon_0$ is the
vacuum electric permettivity.
We normalize these equations without loss of generality by setting
$n_i=1$, $\epsilon_0=1$, $\ms=1$, $\qs=-1$.
We assume that the phase space domain is \emph{periodic in space}, and
that the distribution function is zero at the velocity boundary or is
a \emph{rapidly decreasing function} that tends asymptotically to zero
as $e^{-\abs{\vv}^2/2}$ for $\abs{\vv}\to\pm\infty$.
To complete the mathematical formulation of the Vlasov-Poisson model
we specify the initial distribution function $\fsz(\xv,\vv)$, and we
compute the initial electric field $\Ev(\xv,0)$ by solving
equation~\eqref{eq:Poisson:3D:def} at time $t=0$.

\medskip
For exposition's sake we present the formulation and the convergence
analysis of the spectral methods in one-dimension in space and
velocity, i.e., the $1D$-$1V$ setting.
It should be clear at the end of our exposition that the type of
discretizations adopted here can be extended to multidimensional
problems with minor modifications.
We are basically thinking of situations where the variable $\xv$ is
periodic in all directions, and the variable $\vv$ is defined on a
parallelepiped.
It is a matter of redefining appropriately the way to treat boundary
conditions and use splitting properties of orthogonal projections, but
the 1D-1V derivation carries over 3D-3V lenghtly but quite
straightforwardly.

\par\smallskip

In section~\ref{sec:spectral:discretization} we introduce the spectral
discretizations using Fourier expansion for the spatial term and
either Hermite functions or Legendre polynomials for the velocity
term.
Boundary conditions are handled through a suitable penalty approach in
the velocity space.
This technique is also applied in the Hermite context.
Indeed, in alternative to the standard decay properties of Hermite
functions, zero conditions for $v$ may be enforced in weak form at the
boundaries of a bounded subset $\Ov$ of the whole space.
The integration of the Hermite spectral method on a finite sized
velocity domain is required by the convergence analysis as the
constants of the error estimates contain the size of the phase space
domain, which would blow up if the size of the velocity domain goes to
infinity.
In section~\ref{sec:stability} we reformulate the Vlasov equation and
its truncated approximation as a convection equation in a
two-dimensional phase space and we prove that both formulations are
$L^2$ stable, thanks also to the role played by the special treatment
of boundary conditions.
In section~\ref{sec:electric:field} we analyze the approximation of
the electric field and show that its error is controlled by the
approximation error of the distribution function.
In section~\ref{sec:convergence:analysis} we finally provide a
convergence analysis in the $L^2$ norm.

Further discussion and conclusions are given in
section~\ref{sec:conclusions}.

\section{Spectral discretization of the $1D$-$1V$ Vlasov-Poisson system}
\label{sec:spectral:discretization}

\subsection{The $1D$-$1V$ Vlasov-Poisson system}
\label{subsec:1D1V:formulation}

We consider the $1D$-$1V$ phase space domain $\Omega=\Ox\times\Ov$,
where the spatial subdomain is $\Ox=[0,2\pi[$ and the velocity
subdomain is either $\Ov=]\vmin,\vmax[$ or $\Ov=\REAL$.
Equations~\eqref{eq:Vlasov:3V:def}-\eqref{eq:Poisson:3D:def} become
\begin{align}
  \frac{\partial\fs}{\partial t} 
  + \vs\frac{\partial\fs}{\partial x}
  - \Es\frac{\partial\fs}{\partial\vs} = 0
  \qquad
  \textrm{in~}\Omega,\,t\in[0,T[,
  \label{eq:Vlasov:def}
  \\[0.5em]
  \frac{\partial\Es}{\partial x} = \rho ~~\textrm{where}~~ \rho = 1 -\int_{\Ov}\fs\dv
  \qquad\textrm{in~}\Ox,\,t\in[0,T[.
  \label{eq:Poisson:def}
\end{align}
Equations~\eqref{eq:Vlasov:def}-\eqref{eq:Poisson:def} are defined in
the time range $t\in[0,T[$ for some finite time $T\in\REAL$.
We assume that the phase space domain is \emph{periodic},
which implies, in particular, that $\fs(0,\vs,t)=\fs(2\pi,\vs,t)$ and
$\Es(0,t)=\Es(2\pi,t)$ for any $t\geq 0$.
When $\Ov=]\vmin,\vmax[$, we assume that
$\fs(x,\vmin,t)=\fs(x,\vmax,t)=0$ for every $x\in\Ox$ and $t\in[0,T[$,
while when $\Ov=\REAL$ we assume that $\fs$ is a \emph{rapidly
  decreasing function} in the sense that $\fs(x,\vs,t)\to 0$
asymptotically like $e^{-\vs^2/2}$ for $\vs\to\pm\infty$.
To complete the mathematical formulation of the Vlasov-Poisson model
we specify the initial distribution function $\fsz(x,\vs)$, and we
compute the initial electric field $\Es(x,0)$ by solving
equation~\eqref{eq:Poisson:def} at time $t=0$.

\subsection{Notation and technicalities}
\label{subsec:notation}
Let us introduce the following notation:

\begin{description}
  \smallskip
\item[$\bullet$] $\LL=\LS\times\LF=\Zp\times\ZZ$ is the \emph{infinite index
    range} of velocity and spatial modes. 
  Here, ``$S$'' is a generic label that may refer to either ``$H$''
  (Hermite) or ``$L$'' (Legendre), while ``$F$'' stands for
  \emph{Fourier} (periodic type).
  Throughout the paper, we will specialize when necessary the subindex
  ``$S$'' to refer specifically to the Hermite or the Legendre
  velocity representation.
  Indeed, $\LH$ or $\LG$ will be respectively the \emph{infinite
    index range} of the spectral decompositions using
  Hermite functions or Legendre polynomials;
  \smallskip
\item[$\bullet$] $\NN=(\NS,\NF)\in\mathbbm{N}^+\times\mathbbm{N}^+$ is the
  number of modes taken into account in the approximation of velocity and space,
	respectively;
  \smallskip
\item[$\bullet$] $\LN=\LSN\times\LFN=[0,\NS-1]\times[-\NF,\NF]$ is the
  \emph{finite index range} for the velocity and spatial modes of the
  \emph{truncated} distribution function.
  As a consequence, the notation $\nk\in\LN$ means $0\leq n\leq \NS-1$
  \emph{and} $-\NF\leq k\leq\NF$.
  \smallskip
\item[$\bullet$] $\vert \Omega_x\vert =2\pi$, $\vert \Omega_v\vert =\vmax-\vmin$
and $\vert \Omega\vert =\vert \Omega_x\vert \vert \Omega_v\vert $
denote the measures of the sets $\Omega_x$, $\Omega_v$ and $\Omega$,
respectively.
\end{description}

\medskip
We denote the infinite set of basis functions that are going to be
used for the velocity representation by
$\{\vphi_{n}(\vs)\}_{n\in\LS}$.
These can be either Hermite functions~\cite{Holloway:1996} or Legendre
polynomials~\cite{Manzini-Delzanno-Vencels-Markidis:2016}.
Both of them satisfy the orthogonality property:
\begin{align}
  \int_{\Ov}\vphi_{n}(\vs)\vphi_{n'}(\vs)\dv = \delta_{n,n'}.
  \label{eq:spectral:def}
\end{align}
Hermite functions are obtained as the product of Hermite polynomials
by the exponential $e^{-\vs^2/2}$.
A suitable normalization factor is then introduced in order to
have~\eqref{eq:spectral:def}.
This system is generally referred to as \emph{Symmetrically Weighted
  Hermite}, to distinguish it from the \emph{Asymmetrically Weighted
  Hermite} system. 
In the latter, two distinct sets of basis functions are considered in
a duality relationships through~\eqref{eq:spectral:def} and the
exponential $e^{-\vs^2}$ is, asymmetrically, a multiplier of 
Hermite polynomials in only one of these sets.
We shall work with the symmetric system only, although some of the
results of this paper could be adapted to cover the asymmetric case.
\par\smallskip

The situation regarding Legendre polynomials is more classical.
They are usually defined in the interval $[-1,1]$.
Via a suitable affine transformation we map them onto the interval
$[\vmin,\vmax]$, where they are successively normalized in order to
recover~\eqref{eq:spectral:def}.

\medskip
For the spatial representation we use the Fourier basis functions that
are defined and satisfy an orthogonal property as follows:
\begin{align}
  \eta_{k}(x) = \frac{1}{\sqrt{2\pi}}e^{ikx},
  \qquad\qquad
  \int_{0}^{2\pi}\eta_{k}(x)\eta_{-k'}(x)\dx = \delta_{k,k'}.
  \label{eq:Fourier:def}
\end{align}
When not strictly necessary, throughout the paper we ease the notation
by dropping out the arguments $\vs$ and $x$ from $\vphi_{n}(\vs)$ and
$\eta_{k}(x)$.
\par\smallskip

Using the orthogonal system introduced in~\eqref{eq:spectral:def}
and~\eqref{eq:Fourier:def}, we define the finite dimensional spaces
$\SNs=\SPAN\big(\{\vphi_n(\vs)\}_{n\in\LSN}\big)$;
$\FNs=\SPAN\big(\{\eta_k(x)\}_{k\in\LFN}\big)$; 
$\XNs=\SPAN\big(\{\eta_k(x)\vphi_n(\vs)\}_{k\in\LFN,n\in\LSN}\big)$.
Whenever needed, we specify the symbol $\SNs$ to $\LNs$ (Legendre) 
or $\HNs$ (Hermite). 
Afterwords, we introduce the orthogonal projection operator
$\PSN\,:\,L^2(\Omega_v)\to\SNs$ such that:
\begin{align}
  \forall\psi\in L^2(\Omega_v)\,:\,
  \int_{\Omega}(\psi-\PSN\psi )\vphi_{n}\dv = 0,
  \quad\forall n\in\LSN,
  \label{eq:proj:spectral:00}
\end{align}
and the orthogonal projection operator $\PFN\,:\,L^2(\Omega_x)\to\FNs$
such that
\begin{align}
  \forall\phi\in L^2(\Omega_x)\,:\,
  \int_{\Omega}(\phi-\PFN\phi)\eta_{k}\dx = 0,
  \quad\forall k\in\LFN.
  \label{eq:proj:Fourier:00}
\end{align}
Their extension to functions on $L^2(\Omega)=L^2(\Omega_x\times\Omega_v)$ is obvious
(it is just the matter of freezing one of the two variables), 
so that we can combine them in order to obtain the orthogonal projection
$\PN\,:\,L^2(\Omega)\to\XNs$, which comes from the composition
of operators $\PN:=\PSN\circ\PFN=\PFN\circ\PSN$.

\subsection{Spectral approximation}
\label{subsec:Galerkin:formulation}
For any $t\in[0,T[$, the Galerkin formulation for the Vlasov-Poisson system becomes:
\begin{align}
  \int_{\Omega}\vphi_{n}\eta_{k}\,\bigg(
  \frac{\partial\fs}{\partial t} 
  + \vs\frac{\partial\fs}{\partial x}
  - \Es\frac{\partial\fs}{\partial\vs}
  \bigg)\,\dv\dx & = 0
  \qquad\forall\nk\in\LL,
  \label{eq:Galerkin:formulation:A}\\[0.5em]
  \int_{\Ox}\frac{\partial\Es}{\partial x}\eta_k\,\dx =
  \int_{\Ox}\rho\eta_k\,\dx &
  \phantom{0}
  \qquad\forall k\in\LF , 
  \label{eq:Galerkin:formulation:B}
\end{align}
with the initial condition $\fs(\cdot,\cdot,0) = \fsz$. 
The function $\rho(x,t)$ in~\eqref{eq:Galerkin:formulation:B} is the
right-hand side of~\eqref{eq:Poisson:def} and represents the total
charge density of ions and electrons.
The spectral approximation of
\eqref{eq:Galerkin:formulation:A}-\eqref{eq:Galerkin:formulation:B}
reads as: \emph{Find $\fNs\in\XNs$, $\ENs\in\FNs$ such that}
\begin{align}
  \int_{\Omega}\hspace{-.1cm}\vphi_{n}\eta_{k}\,\bigg(
  \frac{\partial\fNs}{\partial t} 
  + \vs\frac{\partial\fNs}{\partial x}
  - \ENs\frac{\partial\fNs}{\partial\vs}
  \bigg)\dv\dx & = \int_{\Omega}\vphi_{n}\eta_{k}\RN\,\dv\dx
  \quad\forall\nk\in\LN,
  \label{eq:truncated:Galerkin:formulation:A}\\[0.5em]
  \int_{\Ox}\frac{\partial\ENs}{\partial x}\eta_k\,\dx =
  \int_{\Ox}\rhoN\eta_k\,\dx &
  \phantom{0}
  \qquad\forall k\in\LFN, 
  \label{eq:truncated:Galerkin:formulation:B}
\end{align}
with the initial condition $\fNs(\cdot,\cdot,0) = \PN\fsz$. Here,
$\RN$ is a kind of penalty term used to impose weakly boundary
conditions in the discrete space.
The well-posedness of
problem~\eqref{eq:truncated:Galerkin:formulation:A}-\eqref{eq:truncated:Galerkin:formulation:B},
i.e., existence and uniqueness of the numerical solutions
$\fNs\in\XNs$ and $\ENs\in\FNs$ can be proved in $]0,T]$ (for any
finite final time $T$) as discussed at the end of appendix C.
The term $\rhoN(x,t)$ in equation~\eqref{eq:truncated:Galerkin:formulation:B} is
given by:
\begin{align}
  \rhoN(x,t) = 1 - \int_{\Ov}\fNs(x,\vs,t)\dv,
\end{align}
and is an approximation of $\rho(x,t)$ in~\eqref{eq:Poisson:def}. 
Note that $\rhoN$ does not coincide with the projection $\PFN\rho$.
In the right-hand side of~\eqref{eq:truncated:Galerkin:formulation:A}, the term
$\RN$ allows us to set the boundary conditions at $\vmin$ and $\vmax$
in weak form.
This approach is similar to the penalty strategy proposed
in~\cite{Manzini-Delzanno-Vencels-Markidis:2016}.
Term $\RN$ is designed by suitably modifying the boundary term that
naturally originates from the integration by parts of the velocity
derivative on the finite domain $\Ov=]\vmin,\vmax[$.
As it will be clear in Section~\ref{sec:stability}, the special design
of $\RN$ ensures the stability of the Legendre-Fourier method.
It also guarantees that the Hermite-Fourier method, which is stable on
the infinite domain, remains stable when the integration
of~\eqref{eq:truncated:Galerkin:formulation:A} is restricted to the
finite domain $\Ov=]\vmin,\vmax[$ (see also
Remark~\ref{remark:stability:Hermite-Fourier}).
Term $\RN$ is given by the formula: 
\begin{align}
  \RN(x,\vs,t) 
  &= -\frac{1}{2}\PN\ENs(x,t)\left[
    \fNs(x,\vmax,t)\sum_{n\in\LSN}\vphi_n(\vmax)\vphi_n(\vs) 
  \right.
  \nonumber\\[0.5em]
  &\phantom{= \ENs}\quad
  \left.
    -\fNs(x,\vmin,t)\sum_{n\in\LSN}\vphi_n(\vmin)\vphi_n(\vs)
  \right] .
  \label{eq:RN:def}
\end{align}
This way of handling the boundary conditions is valid for both the
Legendre and Hermite spectral approximations and makes it possible
to develop a full stability and convergence analysis.
In the specific case of the Legendre approximation the boundary
conditions can be accounted for in several other ways.
For instance, one can impose these constraints in strong form.
Like in the so-called {\sl tau
  method}~\cite{Canuto-Hussaini-Quarteroni-Zang:2010} this can be done
by projecting the equation to be approximated in a subspace of lower
dimension ($\NS\!-\!2$ in place of $\NS$) and close the system with
two additional equations relative to the enforcement of the boundary
conditions.
An alternative is to encapsulate the boundary constraints directly in
the basis functions, but in this fashion one cannot rely on
orthogonality properties.
Both of these approaches are valid.
Nevertheless, their theoretical analysis looks harder, while we do not
expect the general performance to improve in comparison to the
approach that we are considering in this paper.
\medskip

For the Hermite discretization, on the other hand, we could choose
$\RN=0$ since the method is defined on $\Ov=\REAL$ and resorts to the
rapid decay of the Hermite functions to fulfill the zero boundary
conditions at infinity.
Nevertheless, the convergence theory for the Hermite method is
developed as for the Legendre approximation in
Section~\ref{sec:convergence:analysis} by assuming that the velocity
domain is finite and weakly imposing that $\fs$ is zero at the
velocity boundary through~\eqref{eq:RN:def}.

\par\smallskip
Throughout the paper we will also refer to
equations~\eqref{eq:truncated:Galerkin:formulation:A}
and~\eqref{eq:truncated:Galerkin:formulation:B} as the \emph{truncated
  Vlasov-Poisson system} in the equivalent form: \emph{Find
    $\fNs\in\XNs$, $\ENs\in\FNs$ such that}
\begin{align}
    \PN\left(\frac{\partial\fNs}{\partial t} 
      + \vs\frac{\partial\fNs}{\partial x}
    - \ENs\frac{\partial\fNs}{\partial\vs}
  \right) &= \RN,
  \label{eq:truncated:Vlasov:def}\\[0.5em]
  \PFN\left(\frac{\partial\ENs}{\partial x}-\rhoN\right)
  &= 0,
  \label{eq:truncated:Poisson:def}
\end{align}
where we recall that $\PN$ and $\PFN$ are the projection operators
introduced at the end of subsection~\ref{subsec:notation}.
Actually, in the first equation the action of $\PN$ can be restricted
to the second and third terms since
$\PN\big(\partial\fNs\slash{\partial t}\big)=\partial\fNs\slash{\partial
t}$. 
Also, $\PFN$ in the second equation can be removed since in the
Fourier approximation the differential operator
$\partial\slash{\partial x}$ commutes with the projector.
As this is not true in other approximation systems and in view of
possible generalizations we prefer to keep it.
The formulation~\eqref{eq:truncated:Vlasov:def}-\eqref{eq:truncated:Poisson:def}
is equivalent to a system of ordinary differential equations whose coefficients
are provided in Appendix C.

\par\medskip
\begin{lemma}
  \label{lemma:intg:gN:RN}
  Let $\RN$ be given by~\eqref{eq:RN:def}. 
  Let $\fNs$ and $\ENs$ be
  the solution of problem
  \eqref{eq:truncated:Vlasov:def}-\eqref{eq:truncated:Poisson:def}.
  Let $\gN$ be a function of $\XNs$.
  Then, it holds that:
  \begin{align}
    \int_{\Omega}\gN(x,\vs)\RN(x,\vs,t)\,\dv\dx 
    &=
    -\frac{1}{2}\int_{\Ox}\ENs(x,t)
    \Big[ 
      \fNs(x,\vmax,t)\gN(x,\vmax) 
    \nonumber\\[0.5em]
    &\hspace{1cm}
      - \fNs(x,\vmin,t)\gN(x,\vmin) 
    \Big]\,\dx.
    \label{eq:intg:gN:RN}
  \end{align}
\end{lemma}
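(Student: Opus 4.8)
The plan is to collapse the left-hand side of~\eqref{eq:intg:gN:RN} into a pure boundary expression in two moves: first discard the projector $\PN$ hidden in the definition~\eqref{eq:RN:def} of $\RN$, and then exploit the reproducing-kernel structure of the sums $\sum_{n\in\LSN}\vphi_n(\vmax)\vphi_n(\vs)$ and $\sum_{n\in\LSN}\vphi_n(\vmin)\vphi_n(\vs)$. Writing $\RN=-\tfrac{1}{2}\PN\big[\ENs\,K\big]$ with
\begin{align*}
K(x,\vs,t) &=\fNs(x,\vmax,t)\sum_{n\in\LSN}\vphi_n(\vmax)\vphi_n(\vs)\\
&\quad-\fNs(x,\vmin,t)\sum_{n\in\LSN}\vphi_n(\vmin)\vphi_n(\vs),
\end{align*}
the quantity to compute is $-\tfrac{1}{2}\int_{\Omega}\gN\,\PN\big[\ENs K\big]\,\dv\dx$.

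First I would remove the projector. Since $\PN$ is the $L^2(\Omega)$-orthogonal projection onto $\XNs$, it is self-adjoint, so that $\int_{\Omega}\gN\,\PN[\ENs K]\,\dv\dx=\int_{\Omega}(\PN\gN)\,\ENs K\,\dv\dx$; because $\gN\in\XNs$ we have $\PN\gN=\gN$, and the projector disappears, leaving $-\tfrac{1}{2}\int_{\Omega}\gN\,\ENs\,K\,\dv\dx$. This manipulation is legitimate since $\ENs K\in L^2(\Omega)$: both $\ENs$ and the boundary traces $\fNs(x,\vmax,t)$, $\fNs(x,\vmin,t)$ are finite Fourier sums in $x$, and each kernel sum is a finite combination of the $\vphi_n$.

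The second and decisive step is the velocity integration. For fixed $x$ the function $\gN(x,\cdot)$ belongs to $\SNs$, hence $\gN(x,\vs)=\sum_{n\in\LSN}a_n(x)\vphi_n(\vs)$ with $a_n(x)=\int_{\Ov}\gN(x,\vs)\vphi_n(\vs)\,\dv$ by~\eqref{eq:spectral:def}. Integrating the $\vmax$ kernel against $\gN$ in $\vs$ and using orthogonality gives
\begin{align*}
\int_{\Ov}\gN(x,\vs)\sum_{n\in\LSN}\vphi_n(\vmax)\vphi_n(\vs)\,\dv=\sum_{n\in\LSN}a_n(x)\vphi_n(\vmax)=\gN(x,\vmax),
\end{align*}
i.e. the kernel reproduces the value of $\gN$ at $\vmax$, and likewise at $\vmin$. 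Since $\ENs$, $\fNs(x,\vmax,t)$ and $\fNs(x,\vmin,t)$ depend only on $x$ (and $t$), they factor out of the $\vs$-integral; collecting the two contributions via Fubini yields exactly the right-hand side of~\eqref{eq:intg:gN:RN}.

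The one point requiring care---and the step I would regard as the main obstacle---is the reproducing-kernel identity, which hinges on $\gN(x,\cdot)$ lying \emph{exactly} in $\SNs$, so that its spectral reconstruction reproduces pointwise values at $\vmin$ and $\vmax$, together with the orthonormality~\eqref{eq:spectral:def}. Had $\gN$ contained velocity modes outside $\LSN$, the identity would fail and a projection error would survive at the boundary. Everything else reduces to the self-adjointness of orthogonal projections and a routine separation of the $x$- and $\vs$-integrations.
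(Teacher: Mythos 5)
Your proof is correct and follows essentially the same route as the paper's: expand $\gN(x,\cdot)$ in the orthonormal basis of $\SNs$ so that the sums $\sum_{n\in\LSN}\vphi_n(\vmax)\vphi_n(\vs)$ and $\sum_{n\in\LSN}\vphi_n(\vmin)\vphi_n(\vs)$ act as reproducing kernels recovering $\gN(x,\vmax)$ and $\gN(x,\vmin)$, then factor the $x$-dependent quantities out of the velocity integral. Your explicit removal of the projector $\PN$ via self-adjointness and $\gN\in\XNs$ is a step the paper leaves implicit, so if anything your write-up is slightly more complete.
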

\begin{proof}
  Since $\gN\in\XNs$, for any $x\in\Ox$ we can write:
  \begin{align}
    \gN (x,\vs) = (\PSN\gN )(x,\vs) =
    \sum_{n\in\LSN}\left(
      \vphi_n(\vs)\int_{\Ov}\gN(x,\vs')\vphi_n(\vs')\dvp
    \right).
    \label{eq:fNs:orto:decomp}
  \end{align}
  Thus, from~\eqref{eq:fNs:orto:decomp} we can derive the following
  relation:
  \begin{align}
    &\int_{\Omega}
    \gN(x,\vs)\ENs(x,t)\fNs(x,\vmax,t)\sum_{n\in\LSN}\vphi_n(\vmax)\vphi_n(\vs)\dv\dx 
    \nonumber\\
    &\quad= \int_{\Ox}
    \ENs(x,t)\fNs(x,\vmax,t)\sum_{n\in\LSN}\left(
      \vphi_n(\vmax)\int_{\Ov}\gN(x,\vs)\vphi_n(\vs)\dv
    \right)\dx 
    \nonumber\\
    &\quad= \int_{\Ox}
    \ENs(x,t)\fNs(x,\vmax,t)\gN(x,\vmax)\dx.
  \end{align}
  A similar formula for the integral of
  $\ENs(x,t)\fNs(x,\vmin,t)\gN(x,\vmin)$ is obtained with the same
  argument.
  The assertion of the lemma follows by combining these two
  relations and the definition of $\RN$ provided in~\eqref{eq:RN:def}.
\end{proof}
\par\smallskip

In particular, if we take $\gN=\fNs(\cdot,\cdot,t)$ for a given
$t\geq 0$ we find that:
\begin{align}
  \int_{\Omega}\hspace{-.2cm}\fNs(x,\vs,t)\RN(x,\vs,t)\,\dv\dx =
  -\textstyle{\frac{1}{2}}\hspace{-.1cm}\int_{\Ox}\hspace{-.2cm}\ENs(x,t)[
    \fs(x,\vmax,t)^2\hspace{-.1cm} -\hspace{-.1cm} \fs(x,\vmin,t)^2 
  ]\dx.
  \label{eq:intg:fNs:RN}
\end{align}

\begin{remark}
  Finally, it is worth mentioning that an alternative approach (not
  considered in this work) would be possible by following the dual
  Petrov-Galerkin formulation for odd-order problems proposed
  in~\cite{Shen:2003,Ma-Sun:2000}.
\end{remark}

\section{Stability}
\label{sec:stability}
We reformulate the Vlasov equation~\eqref{eq:Vlasov:def} as follows:
\begin{align}
  \frac{\partial\fs}{\partial t} + \Fv\cdot\GRADxv\fs = 0
  \qquad\textrm{where}\qquad
  \label{eq:Vlasov:bis:def} \Fv = \begin{pmatrix} \vs \\[0.25em] \displaystyle -\Es \end{pmatrix},
\end{align}
and the truncated Vlasov equation~\eqref{eq:truncated:Vlasov:def} as
follows:
\begin{align}
  \frac{\partial\fNs}{\partial t} + \PN\big(\FNv\cdot\GRADxv\fNs\big) = \RN
  \qquad\textrm{where}\qquad\FNv = \begin{pmatrix} \vs \\[0.25em] \displaystyle -\ENs \end{pmatrix},
  \label{eq:truncated:Vlasov:bis:def}
\end{align}
where $\nabla=(\partial\slash{\partial x},\partial\slash{\partial\vs})$.
The advective fields $\Fv$ and $\FNv$ are both divergence-free,
i.e., $\DIVxv\Fv=\DIVxv\FNv=0$, since $\vs$ is an independent variable
while $\Es$ and $\ENs$ do not depend on $\vs$.
This property implies that
\begin{align}
  \fs\,\Fv\cdot\GRADxv\fs 
  = \DIVxv\left(\Fv\frac{\fs^2}{2}\right) - \Big(\DIVxv\Fv\Big)\frac{\fs^2}{2}
  = \DIVxv\left(\Fv\frac{\fs^2}{2}\right).
  \label{eq:Fv:prop:1}
\end{align}
We now integrate both sides of~\eqref{eq:Fv:prop:1} on
$\Omega=\Ox\times\Ov$ and we apply the Divergence Theorem.
Furthermore, we recall that
$\fs(\cdot,\vmax,\cdot)=\fs(\cdot,\vmin,\cdot)=0$ if
$\Ov=[\vmin,\vmax]$ or that $\fs(\cdot,\vs,\cdot)\to 0$ for
$\vs\to\pm\infty$ if $\Ov=\REAL$.
We use these boundary conditions and the periodicity along the
direction $x$ to obtain:
\begin{align}
  \int_{\Omega}\fs\,\Fv\cdot\GRADxv\fs\dx\dv
  &= \int_{\Omega} \DIVxv\left(\Fv\frac{\fs^2}{2}\right)\dx\dv
  = \int_{\partial\Omega}\nv_{\partial\Omega}\cdot\Fv\frac{\fs^2}{2}\dS \nonumber\\[0.5em]
  &= \int_{\partial\Ox\times\Ov}\nv_{\partial\Omega}\cdot\Fv\frac{\fs^2}{2}\dv
  + \int_{\Ox\times\partial\Ov}\nv_{\partial\Omega}\cdot\Fv\frac{\fs^2}{2}\dx = 0,
  \label{eq:Fv:prop:2}
\end{align}
where $\nv_{\partial\Omega}$ is the outward unit vector field normal to
$\partial\Omega$.
Using~\eqref{eq:Vlasov:bis:def} and \eqref{eq:Fv:prop:2} it is easy to
arrive at a stability result in the $L^2(\Omega )$ norm for the
continuous Vlasov-Poisson system.
As a matter of fact, we have:
\begin{align}
  \frac{d}{\dt}\Norm{\fs(\cdot,\cdot,t)}_{L^2(\Omega)}^2 
  = 2\int_{\Omega}\fs\frac{\partial\fs}{\partial t}\dx\dv
  = -2\int_{\Omega}\fs\,\Fv\cdot\GRADxv\fs\dx\dv 
  = 0.
\end{align}
We formally state this result as follows.
\par\smallskip
\begin{theorem}
  Let $\fs$ be the exact solution of~\eqref{eq:Vlasov:bis:def} on the
  domain $\Omega$.
  Then, it holds that:
  \begin{align}
    \frac{d}{\dt}\Norm{\fs(\cdot,\cdot,t)}_{L^2(\Omega)}^2 = 0,
    \label{eq:Fv:prop:4}
  \end{align}
  or, equivalently, that
  $\norm{\fs(\cdot,\cdot,t)}_{L^2(\Omega)}=\norm{\fsz}_{L^2(\Omega)}$
  for every $t\in [0,T[$.
\end{theorem}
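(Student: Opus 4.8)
The plan is to run a standard $L^2$ energy estimate directly on the conservative reformulation \eqref{eq:Vlasov:bis:def}. First I would multiply the Vlasov equation $\partial\fs/\partial t + \Fv\cdot\GRADxv\fs = 0$ by $\fs$ and integrate over the phase space $\Omega = \Ox\times\Ov$. The time-derivative term reassembles into $\tfrac{1}{2}\frac{d}{\dt}\Norm{\fs}_{L^2(\Omega)}^2$ after pulling the derivative outside the integral, so that the whole identity reduces to showing that the advective contribution $\int_{\Omega}\fs\,\Fv\cdot\GRADxv\fs\,\dx\dv$ vanishes.

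For that term I would exploit the divergence-free property $\DIVxv\Fv = 0$, which holds because $\vs$ is an independent variable while $\Es$ is independent of $\vs$. This turns the integrand into a pure divergence via \eqref{eq:Fv:prop:1}, namely $\fs\,\Fv\cdot\GRADxv\fs = \DIVxv(\Fv\,\fs^2/2)$. Applying the Divergence Theorem then converts the phase-space integral into a boundary integral over $\partial\Omega$, exactly as in \eqref{eq:Fv:prop:2}.

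The crux — and essentially the only place where anything must be checked — is the vanishing of this boundary integral, which splits into a spatial part on $\partial\Ox\times\Ov$ and a velocity part on $\Ox\times\partial\Ov$. The spatial part cancels by the assumed periodicity in $x$: the contributions at $x=0$ and $x=2\pi$ carry opposite outward normals while $\fs$ and $\Fv$ take equal values, so they annihilate each other. The velocity part is killed by the boundary behaviour of $\fs$: when $\Ov=]\vmin,\vmax[$ the hypothesis $\fs(\cdot,\vmin,\cdot)=\fs(\cdot,\vmax,\cdot)=0$ makes the integrand vanish pointwise, whereas when $\Ov=\REAL$ the rapid decay $\fs\to 0$ like $e^{-\vs^2/2}$ as $\vs\to\pm\infty$ sends the boundary flux to zero in the limit. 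I expect this boundary analysis to be the main (and essentially only) obstacle, and it is a mild one; once it is settled, combining $\int_{\Omega}\fs\,\Fv\cdot\GRADxv\fs\,\dx\dv = 0$ with the energy identity gives \eqref{eq:Fv:prop:4}, and integrating in time yields the equivalent statement $\norm{\fs(\cdot,\cdot,t)}_{L^2(\Omega)}=\norm{\fsz}_{L^2(\Omega)}$.
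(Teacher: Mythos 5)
Your argument is exactly the paper's: it uses the divergence-free property of $\Fv$ to rewrite $\fs\,\Fv\cdot\GRADxv\fs$ as $\DIVxv(\Fv\fs^2/2)$ as in \eqref{eq:Fv:prop:1}, applies the Divergence Theorem, and kills the boundary terms via periodicity in $x$ and the vanishing (or rapid decay) of $\fs$ at the velocity boundary, which is precisely the derivation of \eqref{eq:Fv:prop:2}. The proposal is correct and follows the same route as the paper.
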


A similar result also holds regarding the truncated problem.
In this case a crucial role is played by term $\RN$.
\par\smallskip
\begin{lemma}
  \label{lemma:intg:gN:FNv}
  Let $\gN$ be a function in $\XNs$, $\FNv$ the
  advective field defined in~\eqref{eq:truncated:Vlasov:bis:def},
  $\fNs$ the solution of
  problem~\eqref{eq:truncated:Vlasov:def}-\eqref{eq:truncated:Poisson:def}
  with $\RN$ given by~\eqref{eq:RN:def}.
  Then, it holds that:
  \begin{align}
    \int_{\Omega}\gN\,\FNv\cdot\GRADxv\fNs\,\dv\dx 
    + \int_{\Omega}\fNs\,\FNv\cdot\GRADxv\gN\,\dv\dx 
    = 2\int_{\Omega}\gN\RN\dx\dv.
    \label{eq:gN:FNv:intg}
  \end{align}
\end{lemma}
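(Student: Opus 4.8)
The plan is to prove the identity by recognizing the left-hand side as the integral of a pure divergence and then applying the Divergence Theorem, exactly as was done for the continuous problem in \eqref{eq:Fv:prop:1}--\eqref{eq:Fv:prop:2}. The key structural fact is that $\FNv$ is divergence-free, $\DIVxv\FNv=0$, since the first component $\vs$ is an independent variable and the second component $-\ENs$ does not depend on $\vs$. First I would add the two integrands on the left-hand side and use the Leibniz product rule to combine them, writing
\begin{align}
\gN\,\FNv\cdot\GRADxv\fNs + \fNs\,\FNv\cdot\GRADxv\gN
= \FNv\cdot\GRADxv\big(\fNs\gN\big)
= \DIVxv\big(\FNv\fNs\gN\big),
\nonumber
\end{align}
where the last equality uses $\DIVxv\FNv=0$. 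Note that $\fNs,\gN\in\XNs$ and $\ENs\in\FNs$ are smooth (finite linear combinations of the basis products $\eta_k\vphi_n$), so no regularity issues arise and the manipulation is justified termwise.

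Next I would integrate over $\Omega=\Ox\times\Ov$ and apply the Divergence Theorem, splitting the boundary into the spatial part $\partial\Ox\times\Ov$ and the velocity part $\Ox\times\partial\Ov$:
\begin{align}
\int_{\Omega}\DIVxv\big(\FNv\fNs\gN\big)\,\dv\dx
= \int_{\partial\Ox\times\Ov}\nv_{\partial\Omega}\cdot\FNv\,\fNs\gN\,\dv
+ \int_{\Ox\times\partial\Ov}\nv_{\partial\Omega}\cdot\FNv\,\fNs\gN\,\dx.
\nonumber
\end{align}
On the spatial boundary the relevant flux component is the first entry $\vs$ of $\FNv$; since $\fNs$, $\gN$ and $\ENs$ are all periodic in $x$ and the outward normals at $x=0$ and $x=2\pi$ are opposite, these two contributions cancel and the spatial boundary integral vanishes. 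On the velocity boundary the relevant flux component is the second entry $-\ENs$; evaluating the outward normal as $+1$ at $\vmax$ and $-1$ at $\vmin$ leaves
\begin{align}
-\int_{\Ox}\ENs\Big[\fNs(x,\vmax,t)\gN(x,\vmax) - \fNs(x,\vmin,t)\gN(x,\vmin)\Big]\,\dx.
\nonumber
\end{align}

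Finally, I would invoke Lemma~\ref{lemma:intg:gN:RN}: its conclusion \eqref{eq:intg:gN:RN} states precisely that $\int_{\Omega}\gN\RN\,\dv\dx$ equals $-\tfrac12$ times the bracketed velocity-boundary expression displayed above, so the velocity boundary integral is exactly $2\int_{\Omega}\gN\RN\,\dx\dv$, which closes the proof. The only delicate point is bookkeeping---getting the signs of the outward normals right and matching the factor $\tfrac12$ and the sign in the definition \eqref{eq:RN:def} of $\RN$---rather than any analytic difficulty; indeed the whole content of the lemma is that $\RN$ has been engineered so that the velocity boundary flux produced by the integration by parts is reproduced by $2\int_{\Omega}\gN\RN$, which is what will make the discrete energy estimate in the sequel mimic the cancellation \eqref{eq:Fv:prop:2} of the continuous case.
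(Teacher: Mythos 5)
Your proposal is correct and follows exactly the paper's own argument: combine the two integrands into $\DIVxv(\FNv\fNs\gN)$ using $\DIVxv\FNv=0$, apply the Divergence Theorem, discard the spatial boundary term by periodicity, and identify the remaining velocity boundary flux with $2\int_\Omega \gN\RN$ via Lemma~\ref{lemma:intg:gN:RN}. The sign bookkeeping for the outward normals and the factor $\tfrac12$ in \eqref{eq:RN:def} is also handled correctly.
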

\begin{proof}
  Since $\FNv$ is a zero-divergence field, using
  Lemma~\ref{lemma:intg:gN:RN} and repeating the same calculations as
  above yields:
  \begin{align*}
    &\int_{\Omega}\gN\FNv\!\!\cdot\!\GRADxv\fNs\dv\dx
    +\int_{\Omega}\fNs\FNv\!\!\cdot\!\GRADxv\gN\dv\dx
    \!=\! \int_{\Omega}\DIVxv\Big(\FNv\fNs\gN\Big)\dv\dx
    \nonumber\\[0.5em]
    &\qquad= \int_{\partial\Ox\times\Ov}\nv_{\partial\Omega}\cdot\FNv\fNs\gN\dv
    + \int_{\Ox\times\partial\Ov}\nv_{\partial\Omega}\cdot\FNv\fNs\gN\dx 
    \nonumber\\[0.5em]
    &\qquad= -\int_{\Ox}\hspace{-.2cm}\ENs(x,t)\Big(\fNs(x,\vmax,t)\gN(x,\vmax)-\fNs(x,\vmin,t)\gN(x,\vmin)\Big)\dx
     \nonumber\\[0.5em]    
     &\qquad= 2\int_{\Omega}\gN\RN\dx\dv,
  \end{align*}
  where we have used~\eqref{eq:intg:gN:RN}.
  This concludes the proof.
\end{proof}
\par\smallskip

In particular, by taking $\gN=\fNs(\cdot,\cdot,t)$
in~\eqref{eq:gN:FNv:intg} we have that:
\begin{align}
  \int_{\Omega}\fNs\,\FNv\cdot\GRADxv\fNs\,\dv\dx = \int_{\Omega}\fNs\RN\dx\dv.
  \label{eq:FNv:prop:2}
\end{align}
By putting together~\eqref{eq:truncated:Vlasov:bis:def} and~\eqref{eq:FNv:prop:2} 
we easily arrive at:
\begin{align}
  \frac{d}{\dt}\Norm{\fNs(\cdot,\cdot,t)}_{L^2(\Omega)}^2 
 & = 2\int_{\Omega}\fNs\frac{\partial\fNs}{\partial t}\dx\dv 
  = -2\int_{\Omega}\fNs\,\big(\PN\big(\FNv\cdot\GRADxv\fNs\big)-\RN\big)\dx\dv 
 \nonumber\\[0.5em]
  &= -2\int_{\Omega}\fNs\,\big(\FNv\cdot\GRADxv\fNs-\RN\big)\dx\dv
  = 0,
  \label{eq:FNv:prop:3}
\end{align}
which implies the $L^2(\Omega )$ stability of the discrete solution.
We formally state this result as follows.
\par\smallskip
\begin{theorem}
  Let $\fNs$ be the exact solution of the truncated Vlasov-Poisson
  system
  \eqref{eq:truncated:Vlasov:def}-\eqref{eq:truncated:Poisson:def}
  on the domain $\Omega$.
  Then, it holds that:
  \begin{align}
    \frac{d}{\dt}\Norm{\fNs(\cdot,\cdot,t)}_{L^2(\Omega)}^2 = 0,
    \label{eq:FNv:prop:4}
  \end{align}
  or, equivalently, that
  $\norm{\fNs(\cdot,\cdot,t)}_{L^2(\Omega)}=\norm{\PN\fsz}_{L^2(\Omega)}$
  for every $t\in [0, T[$.
\end{theorem}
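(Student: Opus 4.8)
The plan is to read the statement off the chain of identities already assembled immediately above it, since the genuine work has been front-loaded into Lemma~\ref{lemma:intg:gN:FNv}. First I would differentiate the squared norm in time, using that $\fNs(\cdot,\cdot,t)$ is a smooth trajectory in the finite-dimensional space $\XNs$, to obtain
\begin{align*}
  \frac{d}{\dt}\Norm{\fNs(\cdot,\cdot,t)}_{L^2(\Omega)}^2
  = 2\int_{\Omega}\fNs\,\frac{\partial\fNs}{\partial t}\,\dx\dv ,
\end{align*}
and then substitute for $\partial\fNs\slash{\partial t}$ from the reformulated truncated Vlasov equation~\eqref{eq:truncated:Vlasov:bis:def}, replacing it by $-\PN\big(\FNv\cdot\GRADxv\fNs\big)+\RN$.

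The one algebraic point worth isolating is the removal of the projector $\PN$ from the advection term. Because $\PN$ is an orthogonal (hence self-adjoint) projector onto $\XNs$ and $\fNs\in\XNs$ satisfies $\PN\fNs=\fNs$, for any $g\in L^2(\Omega)$ one has $\int_{\Omega}\fNs\,(\PN g)\,\dx\dv=\int_{\Omega}(\PN\fNs)\,g\,\dx\dv=\int_{\Omega}\fNs\,g\,\dx\dv$; applying this with $g=\FNv\cdot\GRADxv\fNs$ lets me discard $\PN$. The right-hand side then reduces to
\begin{align*}
  -2\int_{\Omega}\fNs\,\big(\FNv\cdot\GRADxv\fNs-\RN\big)\,\dx\dv .
\end{align*}
At this point I would invoke the special case $\gN=\fNs(\cdot,\cdot,t)$ of Lemma~\ref{lemma:intg:gN:FNv}, recorded in~\eqref{eq:FNv:prop:2}, namely $\int_{\Omega}\fNs\,\FNv\cdot\GRADxv\fNs\,\dv\dx=\int_{\Omega}\fNs\RN\,\dx\dv$; the two contributions cancel exactly, so the time derivative vanishes and~\eqref{eq:FNv:prop:4} follows.

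The conservation statement is then obtained by integrating~\eqref{eq:FNv:prop:4} from $0$ to $t$, which gives $\norm{\fNs(\cdot,\cdot,t)}_{L^2(\Omega)}=\norm{\fNs(\cdot,\cdot,0)}_{L^2(\Omega)}=\norm{\PN\fsz}_{L^2(\Omega)}$, the last equality using the prescribed initial datum $\fNs(\cdot,\cdot,0)=\PN\fsz$. There is essentially no obstacle left in this final argument: the whole difficulty has been absorbed into Lemma~\ref{lemma:intg:gN:FNv}, whose proof exploits the divergence-free structure of $\FNv$ together with the precise design of the penalty term $\RN$ in~\eqref{eq:RN:def}, which is exactly what makes the boundary contributions at $\vmin$ and $\vmax$ cancel the antisymmetric part of the advection form. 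The only step demanding a little care is the self-adjointness identity used to drop $\PN$, since it is this identity---and not any commutation of $\PN$ with the advection operator---that renders the truncated scheme free of artificial dissipation.
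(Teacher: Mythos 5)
Your proposal is correct and follows essentially the same route as the paper: differentiate the squared norm, substitute from~\eqref{eq:truncated:Vlasov:bis:def}, drop the projector $\PN$ using $\fNs\in\XNs$, and cancel the remaining terms via the $\gN=\fNs$ case of Lemma~\ref{lemma:intg:gN:FNv}, i.e.~\eqref{eq:FNv:prop:2}. The only difference is that you spell out the self-adjointness argument for removing $\PN$, which the paper leaves implicit.
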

\par\medskip
\begin{remark}[Stability of Hermite-Fourier method]
  \label{remark:stability:Hermite-Fourier}
  The term $\RN$ can also be used in the framework of Hermite-Fourier
  approximations.
  Actually, it is strongly suggested since it provides excellent
  stabilization properties.
  In the standard approach it is usually assumed that $\RN=0$ and the
  homogeneous boundary conditions in the infinite domain $\Ov=\REAL$
  are imposed by taking advantage of the natural decay of the Hermite
  functions.
  Nonetheless, once the Hermite-Fourier method has been set up
  according
  to~\eqref{eq:truncated:Vlasov:def}-\eqref{eq:truncated:Poisson:def},
  we can restrict the domain to $\Ov=]\vmin,\vmax[$ as in the
  Legendre-Fourier method and introduce the stabilizing term $\RN$ as
  a penalty, with the aim of enforcing zero conditions at $\vmin$ and
  $\vmax$ (in a weak sense, at least).
  Of course, this procedure is going to be effective if the size of
   $\Ov$ is large enough.
  Further comments are reported in the concluding section.
\end{remark}
\par\smallskip

\section{Approximation of the electric field}
\label{sec:electric:field}

The main result of this section is that the error on the approximation
of the electric field is controlled by that on the distribution
function.
\par\smallskip

\begin{theorem}
  \label{theo:electric:field}
  Let $\fs$, $\Es$ be the exact solution of the Vlasov-Poisson
  problem~\eqref{eq:Galerkin:formulation:A}-\eqref{eq:Galerkin:formulation:B}
  and $\fNs$, $\ENs$ be the approximations solving
  Vlasov-Poisson
  problem~\eqref{eq:truncated:Vlasov:def}-\eqref{eq:truncated:Poisson:def}.
  It holds that:
  \begin{align}
    \norm{\Es(\cdot,t)-\ENs(\cdot,t)}_{L^2(\Ox)}
    &\leq C\abs{\Ov}^{\frac{1}{2}}\norm{\fs-\fNs}_{L^2(\Omega)}.
  \end{align}
\end{theorem}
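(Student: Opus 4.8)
The plan is to derive a Poisson equation for the field error, recover the field from its spatial derivative through a Poincar\'e inequality on the periodic domain, and then bound the velocity integral of the distribution error by Cauchy--Schwarz. Concretely, I would set $e_E := \Es - \ENs$. The exact field satisfies $\frac{\partial\Es}{\partial x} = \rho = 1 - \int_{\Ov}\fs\dv$ by~\eqref{eq:Poisson:def}. For the discrete field I would first observe that $\frac{\partial\ENs}{\partial x}\in\FNs$ (since $\ENs\in\FNs$) and that $\rhoN = 1 - \int_{\Ov}\fNs\dv$ also lies in $\FNs$: writing $\fNs=\sum_{(n,k)\in\LN}C_{n,k}\eta_k\vphi_n$, the velocity integral $\int_{\Ov}\fNs\dv=\sum_k\big(\sum_n C_{n,k}\int_{\Ov}\vphi_n\dv\big)\eta_k$ is a Fourier polynomial with modes in $\LFN$, and $1=\sqrt{2\pi}\,\eta_0\in\FNs$. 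Hence $\PFN$ acts as the identity on both terms, so~\eqref{eq:truncated:Poisson:def} reduces to $\frac{\partial\ENs}{\partial x}=\rhoN$. Subtracting the two Poisson relations, the constants cancel and I obtain
\begin{align*}
  \frac{\partial e_E}{\partial x} = \rho - \rhoN = -\int_{\Ov}\big(\fs-\fNs\big)\dv.
\end{align*}

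Next I would pass from this derivative back to $e_E$ itself. On $\Ox=[0,2\pi[$ the derivative controls only the nonzero Fourier modes, while the mean $\int_{\Ox}e_E\,\dx$ is a gauge degree of freedom that the Poisson equation leaves undetermined; indeed the $k=0$ component of both Galerkin Poisson equations merely reproduces the compatibility conditions $\int_{\Ox}\rho\,\dx=\int_{\Ox}\rhoN\,\dx=0$. I would therefore invoke the standard normalization in which $\Es$ and $\ENs$ are both taken with zero spatial average (the natural gauge for a neutral periodic plasma), so that $e_E$ is mean-free. The Poincar\'e inequality for mean-zero periodic functions on $\Ox$, for which the smallest nonzero frequency is $k=1$, then gives $\norm{e_E}_{L^2(\Ox)}\leq\Norm{\frac{\partial e_E}{\partial x}}_{L^2(\Ox)}$.

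Finally I would estimate the right-hand side by Cauchy--Schwarz in the velocity variable: for each fixed $x$ one has $\big|\int_{\Ov}(\fs-\fNs)\,\dv\big|\leq\abs{\Ov}^{1/2}\big(\int_{\Ov}|\fs-\fNs|^2\,\dv\big)^{1/2}$, so squaring, integrating over $\Ox$, and exchanging the order of integration yields $\Norm{\int_{\Ov}(\fs-\fNs)\,\dv}_{L^2(\Ox)}\leq\abs{\Ov}^{1/2}\norm{\fs-\fNs}_{L^2(\Omega)}$. Chaining the three inequalities proves the estimate, with $C$ equal to the Poincar\'e constant (in fact $C=1$); note that finiteness of $\abs{\Ov}$ is exactly the assumption, stated in the introduction, that the velocity domain be bounded in the convergence analysis.

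The main obstacle, and the only genuinely delicate point, is the treatment of the zero Fourier mode: because the Poisson equation is blind to the mean of the field, the whole estimate hinges on fixing the \emph{same} gauge for the exact and the discrete fields. Once that convention is in place, and once the membership $\rhoN\in\FNs$ is checked so that the projector $\PFN$ disappears cleanly, the remaining steps are elementary inequalities.
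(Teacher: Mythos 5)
Your argument is correct, but it is not the route the paper takes --- in fact it is precisely the ``sharper estimate'' that the authors mention in the two paragraphs immediately following the theorem and then deliberately set aside. You invert the one-dimensional Poisson equation directly: you check that $\rhoN$ and $\partial\ENs\slash\partial x$ already lie in $\FNs$ so that $\PFN$ drops out, subtract the two Poisson relations to get $\partial(\Es-\ENs)\slash\partial x=-\int_{\Ov}(\fs-\fNs)\dv$, fix the zero mode by the zero-mean gauge (which the paper does adopt, via $\Es_0=\ENs_0=0$), and close with Poincar\'e plus Cauchy--Schwarz in $v$. This is sound and even yields a better result: the constant is $C=1$ rather than the paper's $C=2\pi\sqrt{6}\slash 3$, and you actually control the full $H^1(\Ox)$ norm of the error, exactly as the paper notes in passing. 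The paper instead represents $\Es$ and $\ENs$ through the Poisson kernels $\KK$ and $\KN$ of~\eqref{eq:KK:def}--\eqref{eq:KN:def}, exploits the orthogonality of $\fNs$ to $\KK-\KN$, and invokes the kernel bounds of Lemmas~\ref{lemma:estimate:KK} and~\ref{lemma:estimate:KN}. The reason for that longer detour is stated explicitly: in several space dimensions the field is recovered from $\rho$ by inverting a divergence (equivalently, by the gradient of a Green's-function potential), and one can no longer simply integrate the equation in $x$ and apply Poincar\'e to the field itself; the kernel representation, by contrast, carries over. So your proof buys sharpness and brevity in 1D--1V, while the paper's buys the generalizability to the multidimensional setting that the authors advertise in the introduction. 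The one point worth tightening in your write-up is the $k=0$ mode of the subtracted equation: if $\int_{\Omega}(\fs-\fNs)\,\dx\dv\neq 0$ the identity $\partial(\Es-\ENs)\slash\partial x=-\int_{\Ov}(\fs-\fNs)\dv$ cannot hold exactly as stated (the left side is mean-free, the right side need not be); the clean fix is to argue mode by mode for $k\neq 0$, where $ik(\Esk-\ENsk)$ equals the $k$-th Fourier coefficient of $-\int_{\Ov}(\fs-\fNs)\dv$, and then sum --- discarding the $k=0$ coefficient on the right only strengthens the bound, so your final estimate survives unchanged.
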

The proof of this theorem is postponed after a few technical
developments that we are going to present right away.
In the case of the Hermite-Fourier discretization, the estimate holds
by taking a finite velocity domain $\Ov=]\vmin,\vmax[$ according to
the observation in Remark~\ref{remark:stability:Hermite-Fourier}.
The path that we shall follow here allows us to generalize the
analysis to the multi-dimensional case as we mentioned in the
introduction.
Indeed, in the 1D-1V case a sharper estimate of error
$\Es(\cdot,t)-\ENs(\cdot,t)$ can be obtained by taking the difference
between~\eqref{eq:Poisson:def} and~\eqref{eq:truncated:Poisson:def}
and noting that $\PFN$ commutes with the differential operator:
\begin{align*}
  \norm{\Es(\cdot,t)-\ENs(\cdot,t)}_{H^1(\Ox)} 
  \leq C\Norm{\fs-\fNs}_{L^2(\Omega)},
\end{align*}
where $C$ is proportional to $\ABS{\Ov}^{\frac{1}{2}}$ and indipendent
of $N$.
Such a simpler approach is not allowed if we consider the divergence
operator acting on the electric field as in~\eqref{eq:Poisson:3D:def}.

\par\smallskip

The Fourier decomposition of $\Es(x,t)$ on the spatial domain $\Ox$
reads as:
\begin{align}
  \Es(x,t) = \sum_{k\in\LF}\Esk(t)\eta_{k}(x).
  \label{eq:Ek:alt:def}
\end{align}
Using~\eqref{eq:Ek:alt:def} in~~\eqref{eq:Galerkin:formulation:B} we
reformulate the $k$-th Fourier mode of the electric field as follows:
\begin{align}
  \Es_{0}(t) = 0, \qquad \Esk(t) = \frac{i}{k}\int_{\Ox\times\Ov}\fs(x,\vs,t)\eta_{-k}(x)\dx\dv
   \ \ \ \textrm{for~} k\neq 0.
  \label{eq:Fourier:coeffs:A}
\end{align}
A similar definition holds for $\ENsk$ (just substitute $\fs$ with
$\fNs$ above).
The condition that $\Es_0(t)=\ENs_0(t)=0$ is equivalent to
$\int_{\Ox}\Es(x,t)\dx=\int_{\Ox}\ENs(x,t)\dx=0$ and
has a physical motivation.
Indeed, it plays the role of a normalizing condition for the electric
field that indicates that the plasma is neutral at the macroscopic
level~\cite{Manzini-Delzanno-Vencels-Markidis:2016}.
The electric field and its approximation can be
expressed in integral form by:
\begin{align}
  \Es (x,t) &= \int_{\Omega}\fs (\xp,\vp,t)\KK(x,\xp)\dvp\dxp,\label{eq:Es:def}\\[0.5em]
  \ENs(x,t) &= \int_{\Omega}\fNs(\xp,\vp,t)\KN(x,\xp)\dvp\dxp,\label{eq:ENs:def}
\end{align}
where $\KK(x,\xp)$ is the \emph{Poisson kernel}, with the following
expression:
\begin{align}
  \KK(x,\xp) = 
  -\KK(\xp,x) =
  i\sum_{k\in\LFs}\frac{1}{k}\eta_{k}(x)\eta_{-k}(\xp).
  \label{eq:KK:def}
\end{align}
In the same way, $\KN(x,\xp)$ is the truncated version given by the
formula:
\begin{align}
  \KN(x,\xp) = 
  -\KN(\xp,x) = 
  i\sum_{k\in\LFNs}\frac{1}{k}\eta_{k}(x)\eta_{-k}(\xp).
  \label{eq:KN:def}
\end{align}
Indeed, one has
\begin{align}
  \Es(x,t) 
  &=\sum_{k\in\LF}\Esk(t)\eta_{k}(x)
  = \sum_{k\in\LFs}\bigg(\frac{i}{k}\int_{\Ox\times\Ov}\fs(\xp,\vp,t)\eta_{-k}(\xp)\dvp\dxp\bigg)\eta_{k}(x)
  \nonumber\\[0.5em]
  &= \int_{\Ox\times\Ov}\fs(\xp,\vp,t) \bigg(i\sum_{k\in\LFs}\frac{1}{k}\eta_{-k}(\xp)\eta_{k}(x)\bigg)\dvp\dxp
  \nonumber\\[0.5em]
  &= \int_{\Ox\times\Ov}\fs(\xp,\vp,t) \KK(x,\xp) \dvp\dxp.
\end{align}
A similar relation holds for $\ENs$ (just substitute $\Esk$ with
$\ENsk$ and $\fs$ with $\fNs$).

In more dimensions we may introduce the electrostatic potential $\us$
and write $\Ev=\nabla_{\xv}\us$, so that $\Delta\us=\rho$.
By expressing $\us$ as a function of $\rho$ through the Green's
function of the Dirichlet problem (typical references are for instance
\cite{Sobolev:1964,Hormander:1983}), the appropriate expression for the
kernel follows from taking the gradient of $\us$.

\par\smallskip

Moreover, both kernels $\KK(x,\xp)$ and $\KN(x,\xp)$ are real-valued
functions.
Indeed, since the complex conjugate of the Fourier basis function is
$\overline{\eta(x)}_{k}=\eta_{-k}(x)$, by swapping the summation index
from $k$ to $-k$, we note that the complex conjugate of $\KK(x,\xp)$
is given by
\begin{align*}
  \overline{\KK(x,\xp)} 
  = -i\!\!\sum_{k\in\LFs}\frac{1}{k}\eta_{-k}(x)\eta_{k}(\xp) 
  = -i\!\!\sum_{k\in\LFs}\frac{1}{-k}\eta_{k}(x)\eta_{-k}(\xp) 
  = \KK(x,\xp),
\end{align*}
and the same holds for $\KN(x,\xp)$.
These properties imply that
$\abs{\KK(x,\xp)}^2=\big(\KK(x,\xp)\big)^2$ and
$\abs{\KN(x,\xp)}^2=\big(\KN(x,\xp)\big)^2$.  
We are now ready to prove the following estimates.
\par\smallskip

\begin{lemma}
  \label{lemma:estimate:KK}
  \begin{align}
    \norm{\KK}_{L^2(\Ox\times\Ox)}^2 &= \frac{\pi^2}{3},
    \label{eq:estimate:KK} \\[0.5em]
    \norm{\KK-\KN}_{L^2(\Ox\times\Ox)}^2 &\leq 2\frac{1}{\NF}.
    \label{eq:estimate:KK-KN}
  \end{align}
\end{lemma}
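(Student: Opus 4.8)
The plan is to exploit the explicit Fourier representations~\eqref{eq:KK:def} and~\eqref{eq:KN:def} together with the orthogonality relation in~\eqref{eq:Fourier:def}, which collapses the resulting double sums into single sums over the spatial modes. Since the excerpt has already established that $\KK$ and $\KN$ are real-valued, I would compute $\norm{\KK}_{L^2(\Ox\times\Ox)}^2$ as $\int_{\Ox}\int_{\Ox}\KK(x,\xp)\,\overline{\KK(x,\xp)}\,\dx\,\dxp$. Substituting~\eqref{eq:KK:def} for $\KK$ and using $\overline{\eta_k}=\eta_{-k}$ to form the conjugate, the product becomes a double sum indexed by $k,k'\in\LFs$ with coefficient $(kk')^{-1}$ and factor $\eta_k(x)\eta_{-k'}(x)\,\eta_{-k}(\xp)\eta_{k'}(\xp)$.

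Integrating in $x$ gives $\int_{\Ox}\eta_k(x)\eta_{-k'}(x)\,\dx=\delta_{k,k'}$ directly from~\eqref{eq:Fourier:def}, and integrating in $\xp$ gives $\int_{\Ox}\eta_{-k}(\xp)\eta_{k'}(\xp)\,\dxp=\delta_{k,k'}$ as well; both Kronecker deltas force $k=k'$, leaving the single sum $\sum_{k\neq 0}k^{-2}$. For~\eqref{eq:estimate:KK} I would then split into positive and negative indices, $\sum_{k\neq 0}k^{-2}=2\sum_{k\geq 1}k^{-2}=2\cdot\frac{\pi^2}{6}=\frac{\pi^2}{3}$, invoking the classical Basel identity.

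For~\eqref{eq:estimate:KK-KN} the identical orthogonality computation applies to $\KK-\KN$, whose Fourier representation is the tail $i\sum_{\abs{k}>\NF}k^{-1}\eta_k(x)\eta_{-k}(\xp)$ obtained by subtracting~\eqref{eq:KN:def} from~\eqref{eq:KK:def}. This yields $\norm{\KK-\KN}_{L^2(\Ox\times\Ox)}^2=\sum_{\abs{k}>\NF}k^{-2}=2\sum_{k\geq\NF+1}k^{-2}$. I would bound the tail by the telescoping comparison $k^{-2}\leq\big(k(k-1)\big)^{-1}=(k-1)^{-1}-k^{-1}$, valid for $k\geq\NF+1\geq 2$, whose sum telescopes to $1/\NF$; this gives the stated bound $2/\NF$.

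The computation is elementary throughout, so there is no genuine obstacle, only two points deserving care: the bookkeeping of the two conjugated Fourier products, so that both orthogonality integrals truly pin down $k=k'$ rather than, say, $k=-k'$; and the tracking of the factor $2$ arising from the symmetric contributions of positive and negative modes in both estimates. The telescoping choice for the tail is the only mildly non-obvious step, although the cruder integral comparison $\sum_{k>\NF}k^{-2}\leq\int_{\NF}^{\infty}x^{-2}\,\dx=1/\NF$ would serve equally well.
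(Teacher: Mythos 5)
Your proof is correct and follows essentially the same route as the paper: both expand the kernels in the Fourier basis, use the orthogonality relation to collapse the double sum to $\sum_{k\neq 0}k^{-2}$ (your use of $\KK\overline{\KK}$ is equivalent to the paper's index swap $l\to -l$ in $(\KK)^2$, since $\KK$ is real), and bound the tail of the series for the second estimate. Your explicit telescoping bound $\sum_{k\geq \NF+1}k^{-2}\leq 1/\NF$ merely spells out the step the paper summarizes as ``estimating the remainder of a convergent series.''
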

\begin{proof}
  Since $\KK$ is a real-valued function, by swapping index $l$ to
  $-l$, the first inequality of the lemma is proven through the
  following development:
  \begin{align}
    &\norm{\KK}_{L^2(\Ox\times\Ox)}^2
    =\int_{\Ox\times\Ox} \big(\KK(x,\xp)\big)^2 \dx\dxp
    \nonumber\\[0.5em]
    &\qquad= -\int_{\Ox\times\Ox}\sum_{k,l\in\LFs}\frac{1}{kl}\eta_{k}(x)\eta_{-k}(\xp)\eta_{l}(x)\eta_{-l}(\xp)\dx\dxp
    \nonumber\\[0.5em]
    &\qquad= -\sum_{k,l\in\LFs}\frac{1}{k(-l)}\int_{\Ox\times\Ox}\eta_{k}(x)\eta_{-k}(\xp)\eta_{-l}(x)\eta_{l}(\xp)\dx\dxp
    \nonumber\\[0.5em]
    &\qquad=  \sum_{k,l\in\LFs}\frac{1}{kl}\,\bigg[\int_{\Ox}\eta_{k}(x)\eta_{-l}(x)\dx\bigg]\,\bigg[\int_{\Ox}\eta_{-k}(\xp)\eta_{l}(\xp)\dxp\bigg]
    \nonumber\\[0.5em]
    &\qquad=  \sum_{k,l\in\LFs}\frac{1}{kl}\,\bigg[\int_{\Ox}\eta_{k}(x)\eta_{-l}(x)\dx\bigg]\,\delta_{k,l}
    \nonumber\\[0.5em]
    &\qquad=  \sum_{k\in\LFs}\frac{1}{k^2} = 2\sum_{k=1}^{\infty}\frac{1}{k^2} = 2\frac{\pi^2}{6} = \frac{\pi^2}{3}.
  \end{align}
	
  As far as the second inequality is concerned, noting that also
    $\KK-\KN$ is a real-valued function and swapping index $l$ to $-l$, one
  has:
  \begin{align}
    &\Norm{\KK-\KN}_{L^2(\Ox\times\Ox)}^2
    =\int_{\Ox\times\Ox} \big(\KK(x,\xp)-\KN(x,\xp)\big)^2 \dx\dxp
    \nonumber\\[0.5em]
    &\qquad= \int_{\Ox\times\Ox}
    \bigg( i\!\!
    \sum_{k\not\in\LFNs} \frac{1}{k}\eta_{k}(x)\eta_{-k}(\xp)
    \bigg)^2\dx\dxp
    \nonumber\\[0.5em]
    &\qquad= -\int_{\Ox\times\Ox}\sum_{k,l\not\in\LFNs}\frac{1}{kl}\eta_{k}(x)\eta_{-k}(\xp)\eta_{l}(x)\eta_{-l}(\xp)\dx\dxp
    \nonumber\\[0.5em]
    &\qquad= -\int_{\Ox\times\Ox}\sum_{k,l\not\in\LFNs}\frac{1}{k(-l)}\eta_{k}(x)\eta_{-k}(\xp)\eta_{-l}(x)\eta_{l}(\xp)\dx\dxp
    \nonumber\\[0.5em]
    &\qquad
    = \sum_{k\not\in\LFNs}\frac{1}{k^2} 
    \leq 2(\NF)^{-1}.
  \end{align}
  The very last inequality comes from estimating the remainder of a
  convergent series.
\end{proof}

\par\smallskip
Additional estimates are reported below.
\par\smallskip

\begin{lemma}
  \label{lemma:estimate:KN}
  \begin{align}
    \abs{\KN(x,\xp)}              &\leq C\ln\NF\qquad\forall x,\xp\in\Ox ,\label{eq:KN:bound:pntwise}\\[0.5em]
    \Norm{\KN}_{L^2(\Ox\times\Ox)}^2 &\leq \frac{\pi^2}{3}( 1 + \NF^{-1}),\label{eq:KN:bound:L2}
  \end{align}
  where $C$ in the first inequality is independent of $\NF$.
\end{lemma}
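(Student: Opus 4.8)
The plan is to make the kernel explicit and then read off both bounds from elementary series estimates. Writing out the Fourier basis functions through $\eta_{k}(x)\eta_{-k}(\xp) = \frac{1}{2\pi}e^{ik(x-\xp)}$, definition~\eqref{eq:KN:def} becomes
\[
  \KN(x,\xp) = \frac{i}{2\pi}\sum_{k\in\LFNs}\frac{1}{k}\,e^{ik(x-\xp)}.
\]
Pairing the indices $k$ and $-k$ collapses the exponentials into sines and gives the equivalent real form $\KN(x,\xp) = -\frac{1}{\pi}\sum_{k=1}^{\NF}\frac{\sin\big(k(x-\xp)\big)}{k}$, which re-confirms that $\KN$ is real-valued. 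Both representations are convenient: the complex one makes the $L^2$ computation immediate by orthogonality, while either makes the pointwise bound transparent.

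For the pointwise estimate~\eqref{eq:KN:bound:pntwise} I would apply the triangle inequality to the complex form and use $\ABS{e^{ik(x-\xp)}}=1$, so that
\[
  \abs{\KN(x,\xp)} \leq \frac{1}{2\pi}\sum_{k\in\LFNs}\frac{1}{\ABS{k}} = \frac{1}{\pi}\sum_{k=1}^{\NF}\frac{1}{k}.
\]
The remaining harmonic sum is bounded by $1+\ln\NF$, which for $\NF$ large enough is at most $C\ln\NF$ with $C$ independent of $\NF$; this yields the claim. I note that this logarithmic bound is deliberately crude --- the partial sums $\sum_{k=1}^{\NF}\frac{\sin(k\theta)}{k}$ are in fact uniformly bounded in $\NF$ and $\theta$ --- but the weaker estimate is all that the later analysis needs and it avoids a summation-by-parts argument.

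For the $L^2$ estimate~\eqref{eq:KN:bound:L2} I would repeat verbatim the orthogonality computation already carried out for $\Norm{\KK}^2$ in the proof of Lemma~\ref{lemma:estimate:KK}, the only change being that the summation index is now restricted to $\LFNs$. This gives $\Norm{\KN}_{L^2(\Ox\times\Ox)}^2 = \sum_{k\in\LFNs}k^{-2} = 2\sum_{k=1}^{\NF}k^{-2}$, which is bounded above by $2\sum_{k=1}^{\infty}k^{-2}=\pi^2/3$; since $\pi^2/3 \leq \frac{\pi^2}{3}(1+\NF^{-1})$, the stated inequality follows. Equivalently, one may note that $\KN$ and $\KK-\KN$ are orthogonal in $L^2(\Ox\times\Ox)$ (they involve disjoint ranges of Fourier modes), so that $\Norm{\KN}^2 = \Norm{\KK}^2 - \Norm{\KK-\KN}^2 \leq \pi^2/3$ directly from Lemma~\ref{lemma:estimate:KK}.

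There is no genuine obstacle here: once the kernel is written out, the first bound is just the harmonic series and the second is controlled by the tail of $\sum k^{-2}$. The only point deserving care is the edge behaviour of $\ln\NF$ for very small $\NF$, which forces the standing assumption that $\NF$ be sufficiently large (say $\NF\geq 2$) in order that $1+\ln\NF\leq C\ln\NF$ hold with $C$ independent of $\NF$.
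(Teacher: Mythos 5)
Your proof is correct. The pointwise bound is argued exactly as in the paper: triangle inequality, $\abs{\eta_k(x)\eta_{-k}(\xp)}=1\slash{2\pi}$, and the logarithmic bound on the partial harmonic sum (your remark that one needs $\NF\geq 2$ for $1+\ln\NF\leq C\ln\NF$ is a legitimate, if pedantic, point the paper glosses over). For the $L^2$ bound you diverge slightly: the paper writes $\Norm{\KN}^2\leq\Norm{\KK}^2+\Norm{\KN-\KK}^2$ and then invokes both estimates of Lemma~\ref{lemma:estimate:KK}, whereas you either redo the orthogonality computation directly to get $\Norm{\KN}^2=\sum_{k\in\LFNs}k^{-2}\leq\pi^2\slash 3$, or observe that $\KN\perp(\KK-\KN)$ so that $\Norm{\KN}^2=\Norm{\KK}^2-\Norm{\KK-\KN}^2$. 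Your route is marginally cleaner and sharper (it drops the $(1+\NF^{-1})$ factor entirely, which the stated inequality of course still accommodates), and it makes explicit the Pythagorean identity that silently justifies the paper's intermediate inequality; the paper's route buys only the convenience of reusing the two displayed estimates of the previous lemma verbatim. Either way the substance is the same elementary series computation.
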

\par\smallskip

\begin{proof}
  The first part is proven by noting that:
  \begin{align}
    \abs{\KN(x,\xp)} 
    = \bigg| \sum_{k\in\LFN\backslash\{0\}}\frac{1}{k}\eta_{k}(x)\eta_{-k}(\xp) \bigg|
    \leq \frac{1}{2\pi}\sum_{k\in\LFN}\left|\frac{1}{k}\right|
    \leq C\ln\NF,
    \label{eq:KN:bound}
  \end{align}
  where the final inequality follows from bounding the partial sum of
  the harmonic series.
  The second inequality follows by using~\eqref{eq:estimate:KK} and~\eqref{eq:estimate:KK-KN}:
  \begin{align}
    \Norm{\KN}_{L^2(\Ox\times\Ox)}^2 
    &\leq \Norm{\KK}_{L^2(\Ox\times\Ox)}^2 + \Norm{\KN-\KK}_{L^2(\Ox\times\Ox)}^2
    \nonumber\\[0.5em]
    &\leq \frac{\pi^2}{3} + 2\NF^{-1}
    \leq \frac{\pi^2}{3}\big( 1 + \NF^{-1}\big).
  \end{align}
\end{proof}
\par\smallskip

\noindent
We are now capable of proving theorem~\ref{theo:electric:field}.

\medskip
\noindent
\emph{Proof of Theorem~\ref{theo:electric:field}.}
Consider the approximation $\ENs$ of the electric field as
suggested in~\eqref{eq:ENs:def}. The evaluation of the $L^2(\Omega_x )$ norm
of the error requires a further integration:
\begin{align}
  &\norm{\Es(\cdot,t)-\ENs(\cdot,t)}_{L^2(\Ox)}^{2}
  \nonumber\\[0.5em]
  &=\int_{\Ox}\bigg|
  \int_{\Omega}\big(\fs(\xp,\vp,t)\KK(x,\xp)-\fNs(\xp,\vp,t)\KN(x,\xp) \big)\dxp\dvp
  \bigg|^2\dx .
  \label{eq:E:estimate:00}
\end{align}
Since $\fNs$ is orthogonal to the difference
$\KK(x,\cdot)-\KN(x,\cdot)$ it holds that 
\begin{align}
  \int_{\Omega}\fNs(\xp,\vp,t)\big(\KK(x,\xp)-\KN(x,\xp)\big)\dxp\dvp=0,
\end{align}
and we transform the inner integral of~\eqref{eq:E:estimate:00}
according to the following algebra:
\begin{align}
  &\int_{\Omega}\big(\fs(\xp,\vp,t)\KK(x,\xp)-\fNs(\xp,\vp,t)\KN(x,\xp) \big)\dxp\dvp
  \nonumber\\[0.5em]
  &=
  \int_{\Omega}
  \bigg(
  \fs(\xp,\vp,t)\big(\KK(x,\xp)-\KN(x,\xp)\big) +
  \big(\fs(\xp,\vp,t)-\fNs(\xp,\vp,t)\big)\KN(x,\xp)
  \bigg)
  \dxp\dvp
  \nonumber\\[0.5em]
  &=
  \int_{\Ox\times\Ov}
  \big(\fs(\xp,\vp,t)-\fNs(\xp,\vp,t)\big)\big(\KK(x,\xp)-\KN(x,\xp)\big)\dxp\dvp
  \nonumber\\[0.5em]
  &\phantom{=}
  +\int_{\Omega}
  \big(\fs(\xp,\vp,t)-\fNs(\xp,\vp,t)\big)\KN(x,\xp)
  \dxp\dvp.
  \label{eq:E:estimate:05}
\end{align}
Therefore, by using~\eqref{eq:E:estimate:05}
in~\eqref{eq:E:estimate:00} and the standard inequality $\vert a+b\vert^2\leq
2a^2+2b^2$, we obtain:
\begin{align}
  &\norm{\Es(\cdot,t)-\ENs(\cdot,t)}_{L^2(\Ox)}^{2}
  \nonumber\\[0.5em]
  &\leq 2\int_{\Ox}\bigg|
  \int_{\Omega}
  \big(\fs(\xp,\vp,t)-\fNs(\xp,\vp,t)\big)\big(\KK(x,\xp)-\KN(x,\xp)\big)\dxp\dvp
  \bigg|^2\dx
  \nonumber\\[0.5em]
  &\phantom{\leq}
  +2\int_{\Ox}\bigg|
  \int_{\Omega}
  \big(\fs(\xp,\vp,t)-\fNs(\xp,\vp,t)\big)\KN(x,\xp)\dxp\dvp
  \bigg|^2\dx.
  \label{eq:E:estimate:10}
\end{align}
At this point, we further bound the error with the help of the
Cauchy-Schwarz inequality:
\begin{align}
  &\norm{\Es(\cdot,t)-\ENs(\cdot,t)}_{L^2(\Ox)}^{2}
  \leq
  2\Norm{\fs-\fNs}_{L^2(\Omega)}^2\abs{\Ov}\int_{\Ox}\Norm{\KK(x,\cdot)-\KN(x,\cdot)}_{L^2(\Ox)}^2\dx
  \nonumber\\[0.5em]
  &\hspace{4.1cm}
  +2\Norm{\fs-\fNs}_{L^2(\Omega)}^2 \abs{\Ov}\int_{\Ox}\Norm{\KN(x,\cdot)}_{L^2(\Ox)}^2\dx
  \nonumber\\[0.5em]
  &\quad\leq
  2\Norm{\fs-\fNs}_{L^2(\Omega)}^2\abs{\Ov}\Big(\Norm{\KK-\KN}_{L^2(\Ox)\times L^2(\Ox)}^2
  +\Norm{\KN}_{L^2(\Ox)\times L^2(\Ox)}^2\Big)
\end{align}
(we recall that for the Hermite-Fourier method we consider the finite
domain $\Omega=]\vmin,\vmax[$).
Using the estimates of Lemma~\ref{lemma:estimate:KK}, we finally get:
\begin{align}
  \norm{\Es(\cdot,t)-\ENs(\cdot,t)}_{L^2(\Ox)}
  \leq C\abs{\Ov}^{\frac{1}{2}}\norm{\fs-\fNs}_{L^2(\Omega)},
\end{align}
where $C=2\pi(\sqrt{6}\slash{3})$.
\ENDPROOF

We end this section with a technical lemma that provides an estimate
of the $L^2(\Omega )$ norm of the electric fields $\Es$ and $\ENs$.
This result follows immediately from the estimate of the kernels $\KK$ and
$\KN$ and the stability of $\fs$ and $\fNs$. Such bounds will be used in the
convergence analysis of the next section.
\par\smallskip

\begin{lemma}
  \begin{align}
    \norm{\Es(\cdot,t)}_{L^2(\Omega)} 
    &\leq C'\,\abs{\Ov}^{\frac{1}{2}}\,\norm{\fsz}_{L^2(\Omega)},
    \label{eq:Es:bound:L2}\\[0.5em]
    \norm{\ENs(\cdot,t)}_{L^2(\Omega)} 
    &\leq C'\,\abs{\Ov}^{\frac{1}{2}}\,\big(1+\NF^{-1}\big)^{\frac{1}{2}}\norm{\PN\fsz}_{L^2(\Omega)},
    \label{eq:ENs:bound:L2}
  \end{align}
  where $C'=\sqrt{3}\pi\slash{3}$.
\end{lemma}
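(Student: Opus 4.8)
The plan is to read both bounds directly off the integral representations \eqref{eq:Es:def} and \eqref{eq:ENs:def} of the electric field, combining a single Cauchy--Schwarz inequality with the kernel norms already computed in Lemmas~\ref{lemma:estimate:KK} and \ref{lemma:estimate:KN} and with the $L^2(\Omega)$ norm conservation proved in Section~\ref{sec:stability}.

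First I would treat $\Es$. Fixing $t$ and $x\in\Ox$, I regard $\Es(x,t)$ in \eqref{eq:Es:def} as the $L^2(\Omega)$ pairing of $\fs(\cdot,\cdot,t)$ against the kernel $\KK(x,\cdot)$, so that Cauchy--Schwarz gives $\abs{\Es(x,t)}^2\leq\norm{\fs(\cdot,\cdot,t)}_{L^2(\Omega)}^2\int_{\Omega}\abs{\KK(x,\xp)}^2\dxp\dvp$. Since $\KK(x,\xp)$ does not depend on $\vp$, the kernel integral equals $\abs{\Ov}\int_{\Ox}\abs{\KK(x,\xp)}^2\dxp$, and it is here that the velocity-measure factor $\abs{\Ov}$ enters. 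Integrating in $x$ over $\Ox$ and identifying $\int_{\Ox}\int_{\Ox}\abs{\KK(x,\xp)}^2\dxp\dx$ with $\norm{\KK}_{L^2(\Ox\times\Ox)}^2=\pi^2/3$ from \eqref{eq:estimate:KK}, I reach $\norm{\Es(\cdot,t)}_{L^2(\Ox)}^2\leq(\pi^2/3)\,\abs{\Ov}\,\norm{\fs(\cdot,\cdot,t)}_{L^2(\Omega)}^2$. Replacing $\norm{\fs(\cdot,\cdot,t)}_{L^2(\Omega)}$ by $\norm{\fsz}_{L^2(\Omega)}$ via the norm conservation of Section~\ref{sec:stability} and taking square roots then yields \eqref{eq:Es:bound:L2} with $C'=\sqrt3\,\pi/3$ (so that $C'^2=\pi^2/3$).

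The discrete bound \eqref{eq:ENs:bound:L2} follows by running the identical argument on \eqref{eq:ENs:def}, with two substitutions. The exact kernel $\KK$ is swapped for its truncation $\KN$, so that $\norm{\KK}_{L^2(\Ox\times\Ox)}^2=\pi^2/3$ is replaced by the slightly larger bound $\norm{\KN}_{L^2(\Ox\times\Ox)}^2\leq(\pi^2/3)(1+\NF^{-1})$ from \eqref{eq:KN:bound:L2} of Lemma~\ref{lemma:estimate:KN}; this accounts exactly for the extra factor $(1+\NF^{-1})^{1/2}$. Likewise, conservation of the discrete norm, $\norm{\fNs(\cdot,\cdot,t)}_{L^2(\Omega)}=\norm{\PN\fsz}_{L^2(\Omega)}$, puts $\norm{\PN\fsz}_{L^2(\Omega)}$ on the right-hand side.

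I do not expect any essential difficulty here: everything reduces to Cauchy--Schwarz, the precomputed kernel norms, and norm conservation. The single point demanding care is the bookkeeping of the velocity measure $\abs{\Ov}$ produced by the $\vp$-independence of the kernels, which must be carried through so that exactly the factor $\abs{\Ov}^{1/2}$ and the constant $C'=\sqrt3\,\pi/3$ emerge, together with pairing, in each of the two cases, the correct kernel estimate with the matching stability identity.
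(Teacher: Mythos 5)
Your proposal is correct and follows essentially the same route as the paper's proof: Cauchy--Schwarz applied to the integral representations \eqref{eq:Es:def} and \eqref{eq:ENs:def}, the factor $\abs{\Ov}$ extracted from the $\vp$-independence of the kernels, the kernel norms from Lemmas~\ref{lemma:estimate:KK} and \ref{lemma:estimate:KN}, and the $L^2$ norm conservation of $\fs$ and $\fNs$ from Section~\ref{sec:stability}. No differences worth noting.
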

\par\smallskip

\begin{proof}
  In order to get inequality~\eqref{eq:Es:bound:L2} we
  use~\eqref{eq:Es:def}, the definition of $\KK$ given in
  \eqref{eq:KK:def}, and we apply the Cauchy-Schwarz inequality:
  \begin{align}
    \norm{\Es(\cdot,t)}_{L^2(\Omega)}^2 
    &= \int_{\Ox}\abs{\int_{\Omega}\fs(\xp,\vp,t)\KK(x,\xp)\dvp\dxp}^2\dx
    \nonumber\\[1.0em]
    &\leq \norm{\fs(\cdot,\cdot,t)}_{L^2(\Omega)}^2 \abs{\Ov}\int_{\Ox}\norm{\KK(x,\cdot)}_{L^2(\Ox)}^2\dx
    \nonumber\\[0.5em]
    &\leq \norm{\fsz}_{L^2(\Omega)}^2\,\abs{\Ov}\,\norm{\KK}_{L^2(\Ox\times\Ox)}^2
    \leq \norm{\fsz}_{L^2(\Omega)}^2  \abs{\Ov}\,\left(\frac{\pi^2}{3}\right).
    \nonumber
  \end{align}
  Note that
  $\norm{\fs(\cdot,\cdot,t)}_{L^2(\Omega)}=\norm{\fsz}_{L^2(\Omega)}$
  follows from the stability of $\fs$, while the estimate of the quantity
  $\norm{\KK}_{L^2(\Ox\times\Ox)}$ has been proven in
  Lemma~\eqref{lemma:estimate:KK}.
  The proof of inequality~\eqref{eq:ENs:bound:L2} follows a similar
  pattern.
  In fact, we use~\eqref{eq:ENs:def}, the definition of $\KN$ given in
  \eqref{eq:KN:def}, we apply the Cauchy-Schwarz inequality, we
  note that
  $\norm{\fNs(\cdot,\cdot,t)}_{L^2(\Omega)}=\norm{\PN\fsz}_{L^2(\Omega)}$
  from the stability of $\fNs$ and we use the estimate for
  $\norm{\KN}_{L^2(\Ox\times\Ox)}$ proven in
  Lemma~\eqref{lemma:estimate:KN}:
  \begin{align}
    \norm{\ENs(\cdot,t)}_{L^2(\Omega)}^2 
    &= \int_{\Ox}\abs{\int_{\Omega}\fNs(\xp,\vp,t)\KN(x,\xp)\dvp\dxp}^2\dx
    \nonumber\\[1.0em]
    &\leq \norm{\fNs(\cdot,\cdot,t)}_{L^2(\Omega)}^2\abs{\Ov}\int_{\Ox}\norm{\KN(x,\cdot)}_{L^2(\Ox)}^2\dx
    \nonumber\\[0.5em]
    &\leq \norm{\PN\fsz}_{L^2(\Omega)}^2\,\abs{\Ov}\,\norm{\KN}_{L^2(\Ox\times\Ox)}^2
    \nonumber\\[0.5em]
    &\leq \norm{\PN\fsz}_{L^2(\Omega)}^2\,\abs{\Ov}\,\left(\frac{\pi^2}{3}\right)\big(1+\NF^{-1}\big).
    \nonumber
  \end{align}
  This concludes the proof of the lemma.
\end{proof}
\par\smallskip

An immediate consequence of the previous lemma is an $L^2(\Omega )$
estimate of the convective fields $\Fv$ and $\FNv$, as stated by the
following result.
\par\smallskip

\begin{lemma}
  \label{lemma:Fv:FNv:bound}
  \begin{align}
    \Norm{\Fv}_{L^2(\Omega)}^2&\leq V^2\abs{\Omega}+C''\frac{\pi^2}{3},
    \label{eq:Fv:bound}\\[0.5em]
    \Norm{\FNv}_{L^2(\Omega)}^2&\leq
    V^2\abs{\Omega}+C''\frac{\pi^2}{3}(1+\NF^{-1}),
    \label{eq:FNv:bound}
  \end{align}
  with $C''=\abs{\Ov}\norm{\fsz}_{L^2(\Omega)}^2$.
\end{lemma}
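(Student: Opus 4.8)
The plan is to read off the two bounds as direct corollaries of the $L^2(\Omega)$ estimates \eqref{eq:Es:bound:L2}--\eqref{eq:ENs:bound:L2} on the electric fields, exploiting only the explicit componentwise structure of the convective fields. Since $\Fv=(\vs,-\Es)^T$ and $\FNv=(\vs,-\ENs)^T$, the pointwise squared moduli split additively as $\abs{\Fv}^2=\vs^2+\Es^2$ and $\abs{\FNv}^2=\vs^2+\ENs^2$. Integrating over $\Omega=\Ox\times\Ov$ then decouples each norm into a kinematic part and an electric part:
\begin{align*}
  \Norm{\Fv}_{L^2(\Omega)}^2  &= \int_{\Omega}\vs^2\,\dx\dv + \norm{\Es(\cdot,t)}_{L^2(\Omega)}^2 ,\\
  \Norm{\FNv}_{L^2(\Omega)}^2 &= \int_{\Omega}\vs^2\,\dx\dv + \norm{\ENs(\cdot,t)}_{L^2(\Omega)}^2 .
\end{align*}

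First I would bound the common kinematic term. Since the convergence analysis is carried out on the finite velocity domain $\Ov=]\vmin,\vmax[$ (also in the Hermite case, per Remark~\ref{remark:stability:Hermite-Fourier}), we have $\abs{\vs}\le V$ on $\Ov$, whence $\int_{\Omega}\vs^2\,\dx\dv\le V^2\abs{\Omega}$; this produces the leading term shared by both estimates. For the electric part of \eqref{eq:Fv:bound} I would square \eqref{eq:Es:bound:L2} and record that $(C')^2=(\sqrt{3}\pi/3)^2=\pi^2/3$, which gives $\norm{\Es(\cdot,t)}_{L^2(\Omega)}^2\le(\pi^2/3)\abs{\Ov}\norm{\fsz}_{L^2(\Omega)}^2=(\pi^2/3)\,C''$ with $C''=\abs{\Ov}\norm{\fsz}_{L^2(\Omega)}^2$. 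Adding the two contributions establishes \eqref{eq:Fv:bound}.

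For the discrete field the argument is identical, starting from \eqref{eq:ENs:bound:L2}; the only extra ingredient is that the orthogonal projection is a contraction, $\norm{\PN\fsz}_{L^2(\Omega)}\le\norm{\fsz}_{L^2(\Omega)}$, which lets me replace $\norm{\PN\fsz}$ by $\norm{\fsz}$ and recover the same constant $C''$, now carrying the extra factor $(1+\NF^{-1})$ inherited from \eqref{eq:ENs:bound:L2}; this yields \eqref{eq:FNv:bound}. There is no genuine obstacle, as the statement is essentially a repackaging of the preceding lemma; the only points demanding care are the constant bookkeeping (so that $(C')^2=\pi^2/3$ matches cleanly the factor $\pi^2/3$ in the claim and $C''$ absorbs both $\abs{\Ov}$ and the initial-data norm) and the invocation of the projection contraction, without which the discrete bound would remain stated in terms of $\norm{\PN\fsz}$ rather than $\norm{\fsz}$.
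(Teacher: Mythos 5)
Your proof is correct and follows the same route as the paper, whose own argument is just the two-line observation that \eqref{eq:Fv:bound} and \eqref{eq:FNv:bound} follow from \eqref{eq:Es:bound:L2} and \eqref{eq:ENs:bound:L2} together with $\abs{\vs}\le V$ on the finite velocity domain. Your additional remarks --- the componentwise splitting $\abs{\Fv}^2=\vs^2+\Es^2$, the bookkeeping $(C')^2=\pi^2/3$, and the contraction $\norm{\PN\fsz}_{L^2(\Omega)}\le\norm{\fsz}_{L^2(\Omega)}$ needed to express the second bound with the same constant $C''$ --- merely make explicit what the paper leaves implicit.
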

\begin{proof}
  Inequality~\eqref{eq:Fv:bound} follows as a consequence
  of~\eqref{eq:Es:bound:L2} and by noting that $\abs{\vs}$ can be bounded by 
	$V:=\max \{\abs{\vmin}, \abs{\vmax}\}$.
  Likewise, we derive inequality~\eqref{eq:FNv:bound} 
  by using~\eqref{eq:ENs:bound:L2}.
\end{proof}
\par\smallskip

\section{Convergence analysis}
\label{sec:convergence:analysis}
In this section, we derive a general convergence result valid for both
Hermite-Fourier and Legendre-Fourier approximations.

\par\smallskip 

\begin{theorem}
  \label{theo:convergence:spectral}
  Let $\fs$ be the solution of the Galerkin formulation
  \eqref{eq:Galerkin:formulation:A}-\eqref{eq:Galerkin:formulation:B}
  of the Vlasov-Poisson system on the domain $\Omega=\Ox\times\Ov$
  (where $\Ov$ may be either $]\vmin,\vmax[$ or $\REAL$ ).
  Let $\fs$ belong to the Sobolev space $H^{m_F}(\Ox)\times
  H^{m_S}(\Ov)$ for some positive numbers $m_F$ and $m_S$. Moreover,
  let $\fNs$ be the solution of the truncated Vlasov-Poisson system
  \eqref{eq:truncated:Vlasov:def}-\eqref{eq:truncated:Poisson:def} on
  the domain $\Ox\times]\vmin,\vmax[$.
  Then, for any $\epsilon >0$, we have the error estimates:

  \medskip
  \begin{description}
  \item[-] for the \emph{Legendre-Fourier} method with $m_F$,
    $m_S\geq 2+\epsilon$:
    \begin{align}
      \hspace{-1cm}
      \norm{\fs(\cdot,\cdot,t)-\fNs(\cdot,\cdot,t)}_{L^2(\Omega)} 
      \leq (\NF+\NL^2)(C_1\NF^{1-m_F+\epsilon}+C_2\NL^{3/2-m_S+2\epsilon});
      \label{eq:stima:1}
    \end{align}
  \item[-] for the \emph{Hermite-Fourier} method with $m_F$, $m_S\geq 2 +\epsilon$:
    \begin{align}
      \hspace{-1cm}
      \norm{\fs(\cdot,\cdot,t)-\fNs(\cdot,\cdot,t)}_{L^2(\Omega)} 
      \leq (\NF+\sqrt{\NH})(C_1\NF^{1-m_F+\epsilon}+C_2\NH^{(1-m_S+\epsilon)/2}).
      \label{eq:stima:2}
    \end{align}
  \end{description}
  In both cases, constants $C_1$ and $C_2$ are independent of $\NN$
  ($\NN=(\NL,\NF)$ for Legendre-Fourier and $\NN=(\NH,\NF)$ for
  Hermite-Fourier), but may depend on $T$, $\vmin$ and $\vmax$.
  According to the projection estimates in Appendix B, $C_1$ and $C_2$ are
  proportional to the Sobolev norms of $\fs$.
\end{theorem}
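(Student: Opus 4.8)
\emph{Proof (sketch).}
The plan is to combine the standard projection-based error splitting with an energy argument in the spirit of the stability proof of Section~\ref{sec:stability}. Write the error as $\fs-\fNs=\zeta-\theta$, where $\zeta:=\fs-\PN\fs$ is the \emph{projection error} (controlled purely by the approximation estimates of Appendix~B) and $\theta:=\fNs-\PN\fs\in\XNs$ is the \emph{discrete error}. Since $\fNs(\cdot,\cdot,0)=\PN\fsz=(\PN\fs)(\cdot,\cdot,0)$, we have $\theta(\cdot,\cdot,0)=0$, which will simplify the final step. Applying $\PN$ to the exact Vlasov equation (it holds pointwise and $\PN$ commutes with $\partial_t$) and subtracting the truncated equation, I obtain an evolution equation for $\theta$ after splitting the nonlinear advection as $\FNv\cdot\nabla\fNs-\Fv\cdot\nabla\fs=\FNv\cdot\nabla(\theta-\zeta)-(\ENs-\Es)\partial_{\vs}\fs$, namely $\partial_t\theta+\PN(\FNv\cdot\nabla\theta)=\RN+\PN(\FNv\cdot\nabla\zeta)+\PN\big((\ENs-\Es)\partial_{\vs}\fs\big)$.

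Next I test this equation with $\theta$ in $L^2(\Omega)$. Since $\theta\in\XNs$, every $\PN$ may be dropped. The principal term $\int_{\Omega}\theta\,\FNv\cdot\nabla\theta\,\dv\dx$ is handled exactly as in the stability proof: because $\FNv$ is divergence-free the integrand is a pure divergence and reduces to a velocity-boundary term, which combines with the penalty contribution $\int_{\Omega}\theta\RN\,\dv\dx$ through Lemma~\ref{lemma:intg:gN:RN} (cf.\ Lemma~\ref{lemma:intg:gN:FNv}). The decisive cancellation is that $\fNs-\theta=\PN\fs$ at $\vmin,\vmax$, so that the two boundary terms collapse into a single contribution $\tfrac12\int_{\Ox}\ENs\big[\theta(x,\vmax)\zeta(x,\vmax)-\theta(x,\vmin)\zeta(x,\vmin)\big]\dx$, involving only traces of the projection error (recall $\fs$ vanishes at the velocity boundary, whence $\PN\fs=-\zeta$ there). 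This is precisely the mechanism by which the special design of $\RN$ produces stability, now transferred to the error equation.

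It remains to bound the consistency terms on the right-hand side. For $\int_{\Omega}\theta\,\FNv\cdot\nabla\zeta\,\dv\dx$ I integrate by parts to move the derivative off the projection error and onto $\theta$, then apply Cauchy--Schwarz together with Lemma~\ref{lemma:Fv:FNv:bound} for $\Norm{\FNv}_{L^2(\Omega)}$ and the inverse inequalities of the discrete spaces: $\Norm{\partial_x\theta}\le C\NF\Norm{\theta}$ for Fourier and $\Norm{\partial_{\vs}\theta}\le C\NL^2\Norm{\theta}$ (respectively $C\sqrt{\NH}\,\Norm{\theta}$) for the Legendre (respectively Hermite) velocity representation; these generate the prefactors $(\NF+\NL^2)$ and $(\NF+\sqrt{\NH})$ in the two estimates. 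The nonlinear term $\int_{\Omega}\theta\,(\ENs-\Es)\partial_{\vs}\fs\,\dv\dx$ is the genuinely coupling one: by Theorem~\ref{theo:electric:field}, $\Norm{\Es-\ENs}_{L^2(\Ox)}\le C\abs{\Ov}^{1/2}\Norm{\fs-\fNs}_{L^2(\Omega)}\le C\abs{\Ov}^{1/2}(\Norm{\zeta}+\Norm{\theta})$, which yields a self-referential $\Norm{\theta}^2$ term (to be absorbed) plus a $\Norm{\zeta}\,\Norm{\theta}$ term. The boundary contribution from the previous step is bounded using trace estimates for $\zeta$ at $v=\vmin,\vmax$ and the $L^\infty$ or $L^2$ control of $\ENs$.

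Collecting these bounds gives a differential inequality $\frac{d}{\dt}\Norm{\theta}^2\le a(t)\Norm{\theta}^2+b(t)$, with $a$ uniformly bounded on $[0,T[$ and $b$ the square of the projection-error contributions multiplied by the inverse-inequality prefactors. Since $\theta(0)=0$, Gr\"onwall's lemma yields $\Norm{\theta(t)}\le C(T)(\NF+\NL^2)\cdot(\text{projection errors})$ (and analogously with $\NF+\sqrt{\NH}$), and the triangle inequality with $\Norm{\zeta}$ delivers the full error. Inserting the Appendix~B rates, which lose one derivative and therefore read $\NF^{1-m_F+\epsilon}$ and $\NL^{3/2-m_S+2\epsilon}$ (respectively $\NH^{(1-m_S+\epsilon)/2}$), reproduces the stated estimates~\eqref{eq:stima:1}--\eqref{eq:stima:2}. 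The main obstacle is closing the Gr\"onwall loop for the nonlinear field coupling: this is possible only because Theorem~\ref{theo:electric:field} controls $\Norm{\Es-\ENs}$ by the \emph{total} $L^2$ error rather than by a stronger norm; a secondary difficulty is the careful bookkeeping of the $N$-powers (inverse inequalities versus projection rates) and the treatment of the penalty-induced velocity-boundary traces.
\ENDPROOF
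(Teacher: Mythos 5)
Your proposal follows the classical $\zeta$--$\theta$ splitting ($\zeta=\fs-\PN\fs$, $\theta=\fNs-\PN\fs$, error equation for $\theta$, energy test, Gr\"onwall with $\theta(0)=0$), whereas the paper never forms an error equation at all: it differentiates $\norm{\fs-\fNs}^2_{L^2(\Omega)}$ directly, uses the exact $L^2$ conservation of \emph{both} $\fs$ and $\fNs$ to reduce this to $-2\tfrac{d}{\dt}\int_\Omega\fs\fNs$, and then applies Lemma~\ref{lemma:intg:gN:FNv} with the test function $\gN=\PN\fs$ together with the algebraic identity~\eqref{eq:conv:10}. The payoff of the paper's route is that all velocity-boundary contributions are packaged into the single exact divergence $\tfrac12\DIVxv\big((\Fv-\FNv)\fs^2+\FNv\fNs\fs\big)$, whose boundary integral vanishes identically because every boundary term carries a factor of $\fs$, which is zero at $\vmin,\vmax$. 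The remaining terms are exactly your consistency and field-coupling terms, bounded the same way you propose (sup-norm of the projection error, inverse inequalities generating $\xi^{\NN}=\NF+\NL^2$ or $\NF+\sqrt{\NH}$, and Theorem~\ref{theo:electric:field} to close the Gr\"onwall loop). So the two arguments share the same key lemmas and the same final bookkeeping; they differ in how the penalty/boundary structure is neutralized.

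That difference is where your sketch has a genuine gap. After combining $\int_\Omega\theta\,\FNv\cdot\GRADxv\theta$ with $\int_\Omega\theta\RN$ you are left with $\tfrac12\int_{\Ox}\ENs\big[\zeta(x,\vmax)\theta(x,\vmax)-\zeta(x,\vmin)\theta(x,\vmin)\big]\dx$ (your computation of this residue is correct, since $\PN\fs=-\zeta$ on $\partial\Ov$), and your integration by parts in the consistency term produces further traces of the same type. These do \emph{not} vanish: $\PN\fs$ does not satisfy the boundary condition exactly. Bounding them requires (i) a pointwise or trace estimate for the projection error $\zeta$ at $\vs=\vmin,\vmax$ and (ii) an inverse trace inequality for the discrete function $\theta$, which for Legendre expansions costs an extra factor of order $\NL$ (since $\abs{\phi(\vmax)}\lesssim\NL\norm{\phi}_{L^2(\Ov)}$ for $\phi\in\calLNs$), plus control of $\ENs$ in a norm strong enough to close H\"older. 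You assert this can be done ``using trace estimates'' but do not verify that the product of the trace of $\zeta$ with the $O(\NL)$ inverse-trace factor still decays at the rate claimed in~\eqref{eq:stima:1}--\eqref{eq:stima:2}; this is precisely the bookkeeping that could spoil the exponents, and it is the step the paper's choice of test function is engineered to avoid. Until that term is estimated explicitly (or eliminated by reorganizing the boundary algebra as the paper does), the proof is not complete.
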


\par\smallskip

\begin{proof}
  In view of the stability of $\fs$ and $\fNs$ and
  using~\eqref{eq:Vlasov:bis:def}
  and~\eqref{eq:truncated:Vlasov:bis:def} we find that:
  \begin{align}
    &\frac{d}{\dt}\norm{\fs-\fNs}_{L^2(\Omega)}^2 
    = \frac{d}{\dt}\left(\norm{\fs}_{L^2(\Omega)}^2 + \norm{\fNs}_{L^2(\Omega)}^2 - 2\int_{\Omega}\fs\fNs\,\dv\dx \right)
    \nonumber \\[0.5em]
    &\qquad= -2\frac{d}{\dt}\int_{\Omega}\fs\fNs\,\dv\dx
    = -2\int_{\Omega}\fNs\frac{\partial\fs}{\partial t}\dv\dx
    -2\int_{\Omega}\fs\frac{\partial\fNs}{\partial t}\dv\dx
    \nonumber \\[0.5em]
    &\qquad= 2\int_{\Omega}\fNs\big(\Fv\cdot\GRADxv\fs\big)\dv\dx
    + 2\int_{\Omega}\fs\Big(\PN\big(\FNv\cdot\GRADxv\fNs\big)-\RN\Big)\dv\dx.
    \label{eq:conv:00}
  \end{align}
  Noting that $\RN$ belongs to $\XNs$ and using the result
  of Lemma~\ref{lemma:intg:gN:FNv} with $\gN=\PN\fs(\cdot,\cdot,t)$,
  we can transform the last integral in~\eqref{eq:conv:00} as follows:
  \begin{align}
    &\int_{\Omega}\fs\big(\PN\big(\FNv\cdot\GRADxv\fNs\big)-\RN\big)\dv\dx
    = \int_{\Omega}\big(\PN\fs\big)\,\big(\FNv\cdot\GRADxv\fNs-\RN\big)\dv\dx
    \nonumber\\[0.5em]
    &\qquad
    =\frac{1}{2}\int_{\Omega}\Big( \big(\PN\fs\big)\,\FNv\cdot\GRADxv\fNs
    -\fNs\FNv\cdot\GRADxv(\PN\fs)\Big)\dv\dx .
    \label{eq:conv:05}
  \end{align}
  Now, with little algebraic manipulation we can get the identity:
  \begin{align}
    &\fNs\big(\Fv\cdot\GRADxv\fs\big) 
    +\frac{1}{2}\Big( \big(\PN\fs\big)\FNv\cdot\GRADxv\fNs - \fNs\FNv\cdot\GRADxv(\PN\fs) \Big)
    \nonumber\\[0.5em]
    &= \frac{1}{2} \Big( \big(\PN\fs-\fs\big)\FNv\cdot\GRADxv\fNs + \fNs\FNv\cdot\GRADxv\big(\fs-\PN\fs\big) \Big)
    \nonumber\\[0.5em]
    &\qquad+\big(\fNs-\fs\big)\big(\Fv-\FNv\big)\cdot\GRADxv\fs 
    +\frac{1}{2}\DIVxv\Big( \big(\Fv-\FNv\big)\fs^2 + \FNv\fNs\fs \Big) .
    \label{eq:conv:10}
  \end{align}

  We substitute~\eqref{eq:conv:05} in~\eqref{eq:conv:00} and use~\eqref{eq:conv:10}.
  Since the integral of the divergence term is zero because of the
  boundary conditions on $\fs$, we reformulate \eqref{eq:conv:00} as
  follows:
  \begin{align}
    \frac{d}{\dt}\Norm{\fs-\fNs}_{L^2(\Omega)}^2 
    = \mathcal{E}_{\textrm{proj}}(t) + \mathcal{E}_{\textrm{appr}}(t)
    \leq \abs{\mathcal{E}_{\textrm{proj}}(t)} + \abs{\mathcal{E}_{\textrm{appr}}(t)},
    \label{eq:conv:15}
  \end{align}
  where 
  \begin{align}
    \mathcal{E}_{\textrm{proj}}(t)
    &= \int_{\Omega}\Big( \big(\PN\fs-\fs\big)\FNv\cdot\GRADxv\fNs 
    + \fNs\FNv\cdot\GRADxv\big(\fs-\PN\fs\big) \Big)\dv\dx,
    \label{eq:proj:error:def}\\[0.5em]
    \mathcal{E}_{\textrm{appr}}(t)
    &= 2\int_{\Omega} \big(\fNs-\fs\big)\big(\Fv-\FNv\big)\cdot\GRADxv\fs\dv\dx.
    \label{eq:appr:error:def}
  \end{align}
    
  \medskip
  Term $\mathcal{E}_{\textrm{proj}}(t)$ depends on the
  \emph{projection} error $\PN\fs-\fs$ and its gradient; term
  $\mathcal{E}_{\textrm{appr}}(t)$ depends on the \emph{approximation}
  errors $\fNs-\fs$ and $\ENs-\Es$.
  In the next subsections, we will prove that:
  \begin{align}
    \abs{\mathcal{E}_{\textrm{proj}}(t)} &\leq \alpha(t;\NN), 
    \label{eq:Eproj:def}\\[0.5em]
    \abs{\mathcal{E}_{\textrm{appr}}(t)} &\leq \beta(t)\Norm{\fs-\fNs}_{L^2(\Omega)}^2,
    \label{eq:Eappr:def}
  \end{align}
  where $\alpha(t;\NN)\to 0$ for $\abs{\NN}\to\infty$ and $\beta(t)>0$
  is independent of $\NN$.
  The specific form of these functions depends on the choice of the
  spectral discretization and is detailed in the following subsections
  for the Legendre-Fourier method and the Hermite-Fourier method.
  Substituting~\eqref{eq:proj:error:def}
  and~\eqref{eq:appr:error:def} in~\eqref{eq:conv:15} yields
  \begin{align}
    \frac{d}{\dt}\norm{\fs-\fNs}_{L^2(\Omega)}^2 
    \leq \alpha(t;\NN) + \beta(t)\Norm{\fs-\fNs}_{L^2(\Omega)}^2.
    \label{eq:conv:20}
  \end{align}
  The assertion of the theorem follows by applying the Gronwall
  inequality and the estimates established in the next subsections.

\end{proof}

\subsection{Estimates of the projection error}

To estimate the first term of the projection
error~\eqref{eq:proj:error:def}, we first extract the supremum of
$\abs{\PN\fs-\fs}$ from the integral, we apply the Cauchy-Schwarz
inequality and we note that $\norm{\FNv}_{L^2(\Omega)}$ can be
bounded by a positive constant that is independent of $\NN$ but
depends on $\abs{\Ov}$ in view of ~\eqref{eq:FNv:bound}.
We obtain:
\begin{align}
  \left\vert \int_{\Omega}\big(\PN\fs-\fs\big)\FNv\GRADxv\fNs\,\dv\dx \right\vert
  &\leq C_1\Big(\sup_{(x,\vs)\in\Omega} \abs{\PN\fs-\fs}\Big)\,\norm{\FNv}_{L^2(\Omega)}\,
	\norm{\GRADxv\fNs}_{L^2(\Omega)}
  \nonumber\\[0.5em]
  &\leq C_2\,\norm{\fs -\PN\fs}_{H^{1+\epsilon}(\Omega)}\, \norm{\GRADxv\fNs}_{L^2(\Omega)},
  \label{eq:Eproj:1}
\end{align}
where both constants $C_1$ and $C_2$ are strictly positive and
independent of $\NN$ (they may however depend on $\fsz$ and
$\abs{\Ov}$, cf. Lemma~\ref{lemma:Fv:FNv:bound}). 
Note that $H^{1+\epsilon}(\Omega)$ with $\epsilon >0$ is included in
$L^\infty (\Omega)$, in order to justify the last inequality.

\par\medskip
To estimate the second term of the projection
error~\eqref{eq:proj:error:def} we argue in a similar way, obtaining:
\begin{align}
  \left\vert \int_{\Omega}\fNs\FNv\GRADxv\big(\PN\fs-\fs\big)\,\dv\dx\right\vert
  &\leq \Big( \sup_{(x,\vs)\in\Omega}\abs{\GRADxv(\PN\fs-\fs)}\Big)\,\norm{\FNv}_{L^2(\Omega)}\,\norm{\fNs}_{L^2(\Omega)}
  \nonumber\\[0.5em]
  &\leq C_3\norm{\fs -\PN\fs}_{H^{2+\epsilon}(\Omega)},
  \label{eq:Eproj:2}
\end{align}
where the $L^2(\Omega)$-norm of $\FNv$ is absorbed by constant $C_3$,
which is independent of $\NN$, but may still depend on $\fsz$ and
$\abs{\Ov}$.
Putting together~\eqref{eq:Eproj:1} and~\eqref{eq:Eproj:2} yields:
\begin{align}
  \abs{\mathcal{E}_{\textrm{proj}}(t) }
  \leq C_4\left(
    \norm{\GRADxv\fNs}_{L^2(\Omega)}\norm{\fs -\PN\fs}_{H^{1+\epsilon}(\Omega)}
    +\norm{\fs -\PN\fs}_{H^{2+\epsilon}(\Omega)}\right),
  \label{eq:proj:error:00}
\end{align}
where $C_4$ absorbs the previous constants and does not depend on
$\NN$.
Using standard inverse inequalities of spectral approximations, see
section~\ref{appendix:inverse-inequalities} in appendix, and recalling
that
$\norm{\fNs(\cdot,\cdot,t)}_{L^2(\Omega)}=\norm{\fNs(\cdot,\cdot,0)}_{L^2(\Omega)}=\norm{\PN\fsz}_{L^2(\Omega)}$,
$\forall t\in [0,T[$, we obtain:
\begin{align}
  \norm{\GRADxv\fNs}_{L^2(\Omega)} \leq \xi^{\NN}\norm{\PN\fsz}_{L^2(\Omega)},
  \label{eq:inverse:inequality}
\end{align}
where we introduced the auxiliary coefficient:
\begin{align}
  \xi^{\NN} =
  \begin{cases}
    \NF + \NL^2      & \textit{(Legendre-Fourier method)},\\[0.5em]
    \NF + \sqrt{\NH} & \textit{(Hermite-Fourier method)}.
  \end{cases}
\end{align}
Using~\eqref{eq:inverse:inequality}
  in~\eqref{eq:proj:error:00} yields
  \begin{align}
    \abs{\mathcal{E}_{\textrm{proj}}(t)}\leq C_{5}
    \left(
      \xi^{\NN}\norm{\fs -\PN\fs}_{H^{1+\epsilon}(\Omega)} + \norm{\fs -\PN\fs}_{H^{2+\epsilon}(\Omega)}
    \right),
  \label{eq:proj:error:10}
  \end{align}
  where the constant $C_{5}$ absorbs the $L^{2}(\Omega)$-norm of $\PN\fsz$
  and the previous constants.
\par\smallskip
  
The estimation of the bound of $ \mathcal{E}_{\textrm{proj}}$ is
concluded by applying the estimates for the projection error onto the
functional spaces $\FNs$, $\LNs$, $\HNs$ (see
section~\ref{appendix:Orthogonal-projections} in appendix).
From these estimates we may note that the error in $H^{1+\epsilon}(\Omega )$
decays faster than that in $H^{2+\epsilon}(\Omega )$, however the last one is
multiplied by $\xi^N$, so that the terms on the right-hand side
of~\eqref{eq:proj:error:10} are well balanced. 
The two estimates can be merged to obtain:
\begin{align}
  \abs{\mathcal{E}_{\textrm{proj}}(t)}\leq \xi^{\NN}\times
  \begin{cases}
    \big( C_5\NF^{1-m_F+\epsilon}+C_6\NL^{3/2-m_S +2\epsilon} \big) & \textit{(Legendre-Fourier)}, \\[0.5em]
    \big( C_5\NF^{1-m_F+\epsilon}+C_6\NH^{(1-m_S+\epsilon)/2}\big) & \textit{(Hermite-Fourier)},
  \end{cases} 
\end{align}
where the positive constants $C_5$ and $C_6$ are independent of $\NN$,
but depend on the regularity of $\fs$ through its higher-order Sobolev
norms. 
To this regard, let us note that in the Hermite-Fourier case we have
$\Omega =[0, 2\pi[\times ]\vmin , \vmax [$ in~\eqref{eq:stima:2}.
Nevertheless, the norms of $f$ on the right-hand side are evaluated in
$\Omega =[0, 2\pi[\times \REAL$.
\par\smallskip

\begin{remark}
  These estimates are, perhaps, not optimal due to the use of the
  inverse inequality in~\eqref{eq:inverse:inequality}. 
  We recall that the boundary conditions are imposed in the discrete space 
  through the penalty term $R^N$ and are only satisfied in weak form. 
  Thus, whenever one tries to modify the integrals with an integration by 
  parts, some boundary terms are produced that are difficult to estimate 
  in optimal way. 
  This is indeed the case of $\mathcal{E}_{\textrm{proj}}$ 
  in~\eqref{eq:proj:error:def} if we try to transform the gradient operator 
  of $\fNs$ into the divergence of $(\PN\fs -\fs)\FNv$, in order to avoid 
  the inverse inequality in~\eqref{eq:inverse:inequality}.
\end{remark} 

\subsection{Estimate of the approximation error}
As far as the approximation error $\mathcal{E}_{\textrm{appr}}$ is
concerned, we first note that:
\begin{align}
  \Fv(x,\vs,t)-\FNv(x,\vs,t) = \begin{pmatrix} 0\\ \displaystyle -\big(\Es(x,t)-\ENs(x,t)\big) \end{pmatrix}.
\end{align}
We plug this relation into the definition of the approximation
error~\eqref{eq:appr:error:def}.
Afterwords, we proceed with a few standard inequalities and we apply
the result of Theorem~\ref{theo:electric:field}:
\begin{align}
  \abs{\mathcal{E}_{\textrm{appr}}}
  &\leq 2 \int_{\Omega}\abs{\fNs-\fs}\,\abs{\Es-\ENs}\,\abs{\frac{\partial\fs}{\partial v}}\,\dv\dx
  \nonumber\\[0.5em]
  &\leq 2 \sup_{(x,\vs)\in\Omega}\abs{\frac{\partial\fs}{\partial\vs}}\norm{\fNs-\fs}_{L^2(\Omega)}\,\norm{\Es-\ENs}_{L^2(\Omega)}
  \leq C_7\norm{\fNs-\fs}_{L^2(\Omega)}^2,
\end{align}
where the positive constant $C_7$ is independent of $\NN$, but may
depend on the regularity of $\fs$ and $\abs{\Ov}$. 
If $C_7$ were dependent on $\NN$ we would be in trouble as $\beta (t)$
in~(\ref{eq:Eappr:def}) could grow with $\NN$.
In fact, after applying the Gronwall inequality to~(\ref{eq:conv:20}),
we get an estimate containing the term
$\exp\hspace{-.1cm}\Big(\int_0^T\beta(t)\dt\Big)$ in the right-hand
side.
This expression might become huge, thus providing a meaningless final
error estimate if $\beta$ were unbounded with respect to $\NN$.

\subsection{Further remarks on Hermite-Fourier approximations}
The Hermite-Fourier method deserves some further comments.
Although Hermite-based approximations are usually stated on
$\Ov=\REAL$, the estimate of Theorem~\ref{theo:convergence:spectral}
is derived in $L^2(\Ox\times\Ov)$ with $\Ox=[0,\,2\pi[$,
$\Ov=]\vmin,\vmax[$, and assuming that $\abs{\Ov}=\vmax-\vmin$ is
finite and hopefully not too big.
In fact, the constants of this estimate depend on the size of $\Omega$
and blow up for $\abs{\Ov}$ tending to infinity.
There are some critical issues here that we want to point it out.
First, Hermite functions are substantially different from zero on a
support that grows as $\sqrt{\NH}$.
Second, even if the exact solution $f$ has compact support, its
approximation by the Hermite functions may require a larger support as
$\NH$ grows.
Assuming that the size of $\Ov$ depends on $\NH$ may lead us to
serious drawbacks for the reasons detailed at the end of the previous
section.

\par\medskip
Note that Reference~\cite{Pulvirenti-Wick:1984}, where a similar
analysis was carried out for $\RN=0$ in the 2D-2V case, did not
address these issues.
There, estimates were given on a finite domain (only depending on time
$t$) without imposing artificial conditions and assuming (with too
much optimism, maybe) that the discretized solutions were remaining
with good approximation within the support of the exact solution
independently of $\NH$.
The above considerations teach us that domain $\Ov$ must be chosen
``wisely'' depending on the behavior manifested by the exact solution.
In particular, $\Ov$ should be large enough so that imposing zero
boundary constraints weakly is not too stringent; at the same time,
$\Ov$ must not be too large to avoid the negative influence mentioned
above on the error estimates.

\par\smallskip

\section{Conclusions}
\label{sec:conclusions}

In this paper we provided a convergence theory for the approximation
of the Vlasov-Poisson system by the symmetrically-weighted
Hermite-Fourier spectral method (restricted to a finite sized velocity
domain) and the Legendre-Fourier spectral method.

\par\smallskip

A modified weak form of the boundary conditions at the extrema of the
velocity domain made it possible to prove the stability of both
approximations.
It is well-known that the symmetrically-weighted Hermite-based
approximation is stable when the integration is on the infinite
velocity domain.
Therefore, what we proved here is that the stability remains preserved
in our formulation also when the Hermite-Fourier method integrates the
Vlasov-Poisson system on a finite velocity domain.

\par\smallskip

Finally, we note that the error estimates are weak, since they are
obtained in the $L^2(\Omega )$ norm.
For first-order nonlinear problems such as the one we are dealing
with, developing a better convergence theory could be hard.
Note that the situation in the Hermite case with $R^N=0$ (usually
employed in many applications) is even worse, since the subset
consisting of rapidly decaying functions is not closed in the
$L^2(\REAL )$ metric.
Nevertheless, this paper provides a solid theoretical foundation to
spectral methods applied to Vlasov-Poisson systems.
In addition, the penalty term $\RN$ offers a promising strategy of
handling joining conditions in multi-domain spectral approximations,
which will be the topic of further research.

\vskip3mm
\begin{center}
  \begin{large}
    {\bf Acknowledgements}
  \end{large}
\end{center}
\vskip2mm\noindent 
This work was partially funded by the Laboratory Directed Research and
Development program (LDRD), under the auspices of the National Nuclear
Security Administration of the U.S. Department of Energy by Los Alamos
National Laboratory, operated by Los Alamos National Security LLC
under contract DE-AC52-06NA25396.
The Authors gracefully thank the anonymous Reviewers for their effort and
useful suggestions.

\bibliographystyle{plain}
\bibliography{spectral}

\clearpage

\appendix

\section{Inverse inequalities}
\label{appendix:inverse-inequalities}

In this appendix and the next one, we list a series of well-known
results, limiting the exposition to the simplest case where the
indices $m$ and $r$ are integer numbers.
More general results that cover the case where $m$ and $r$ are non
integer numbers are available from the literature.
These results could be used to obtain sharper estimates but would also
increase the technicality of the exposition.

\par\smallskip

Let $\Ox=[0,2\pi[$, $\Ov=]\vmin,\vmax[$. Let $\LNs :=\SNs$ in the
Legendre case and $\HNs :=\SNs$ in the Hermite case.
Moreover, let us denote by $H^m(\Ov)$ the standard Sobolev space of
$L^2$-integrable functions whose derivatives are also $L^2$-integrable
up to order $m$.
Similarly, we have that $H_p^m(\Ox)=H_p^m(0,2\pi )$ is the
corresponding Sobolev space in the case of periodic functions.
As usual: $H^0(\Ov )=L^2(\Ov )$ and $H^0_p(0,2\pi )=L^2(0,2\pi )$.
We also consider the space $L^2_w(\REAL)$ of functions that are square
integrable with respect to the positive weight function
$w(v)=e^{-v^2}$. 
Of course, the corresponding norm is:
\begin{align}
  \norm{\psi}_{L^2_w(\REAL)} = \left(\int_{\REAL}\abs{\psi(\vs)}^2e^{-\vs^2}\dv\right)^{1/2}.
  \label{eq:proj:Hermite:10}
\end{align}
\par\smallskip
 
\noindent
Sobolev type functional spaces for a non integer $m\geq 0$ are obtained through
standard interpolation techniques.
Then, in the finite dimensional spaces, one has the following inverse
inequalities.
\par\smallskip
\begin{itemize}
\item Periodic Fourier: for all numbers $m$ and $r$ such that $0\leq
  r\leq m$ it holds that
\begin{align}
  \norm{\phi}_{H^r_p(\Ox)}\leq C\NF^{r-m}\norm{\phi}_{H^m_p(\Ox)}
  \qquad\forall\phi\in \mathcal{F}^{N_F},
\end{align}
where $C$ is independent of $\NF$;
see~\cite[Section~5.8.1]{Canuto-Hussaini-Quarteroni-Zang:2010}
\item Legendre polynomials: for all  $r\geq 1$ it holds that
\begin{align}
  \Norm{\frac{\partial^r\phi}{\partial\vs^r}}_{L^2(\Ov)}\leq C\NL^{2r}\norm{\phi}_{L^2(\Ov)}
  \qquad\forall\phi\in\mathcal{L}^{N_L},
\end{align}
where the constant $C$ is independent of $\NL$ but depends on
$\abs{\Ov}$;
see~\cite[Section~9.4.1]{Canuto-Hussaini-Quarteroni-Zang:1988}
\item Hermite polynomials:
\begin{align}
  \Norm{\frac{\partial\phi}{\partial\vs}}_{L^2_w(\REAL)}\leq C\sqrt{\NH}\norm{\phi}_{L^2_w(\REAL)},
\end{align}
for all polynomials of degree at most $\NH$, where the constant $C$ is
independent of $\NH$.
A similar result holds in the case of Hermite functions
($\phi\in\mathcal{H}^{N_H}$).
In this case the weight function is $w(v)=e^{v^2}$. 
One can pass from a case to the other by virtue
of~\cite[Lemma~6.7.4]{Funaro:1992}.

\end{itemize}
\par\smallskip

\section{Orthogonal projections}
\label{appendix:Orthogonal-projections}

\begin{itemize}
\item Periodic Fourier: consider the operator $\mathcal{P}_F^{N_F}$,
  which projects $L^2(0,2\pi)$ onto
  $\textrm{span}\{\eta_k\}_{k\in\LFN}$.
  We have the following estimate for the projection error:
  \begin{align}
    \norm{\psi-\mathcal{P}_F^{N_F}\psi}_{H^r_p(0,2\pi)}
    \leq C\NF^{r-m}\Norm{\frac{\partial^{m}\psi}{\partial x^{m}}}_{L^2(0,2\pi)},
    \label{eq:proj:Fourier:error:10}
  \end{align}
  which holds for every $0\leq r\leq m$.
  More details are found
  in~\cite[Section~5.1.2]{Canuto-Hussaini-Quarteroni-Zang:2010}.

\item Legendre polynomials: consider the operator
  $\mathcal{P}_L^{N_L}$, which projects $L^2(\Ov)$ onto the space of
  polynomials of degree at most $\NL$.
  We have the following estimate:
  \begin{align}
    \norm{\psi-\mathcal{P}_L^{N_L}\psi}_{L^2(\Ov)}
    \leq C\NL^{-m}\norm{\psi}_{H^m(\Ov)},
    \label{eq:proj:Lagrange:20}
  \end{align}
  where $m\geq 0$ and the constant $C$ is independent of $N_L$
  but depends on $\abs{\Ov}$.  
  An extension, where at the left-hand side we find higher Sobolev
  norms is available:
  \begin{align}
    \norm{\psi-\mathcal{P}_L^{N_L}\psi}_{H^r(\Ov)}
    \leq C\NL^{2r-1/2-m}\norm{\psi}_{H^m(\Ov)},
    \label{eq:proj:Lagrange:21}
  \end{align}
  where $m\geq r\geq 1$.
  More details are found
  in~\cite[Section~5.4.2]{Canuto-Hussaini-Quarteroni-Zang:2010}.

\item Hermite polynomials: 
  we have the following estimate for the projection error in the space
  of polynomials of degree at most $\NH$:
  \begin{align}
    \norm{\psi-\mathcal{P}_H^{N_H}\psi}_{L^2_w(\REAL)}
    \leq C\NH^{-m/2}\Norm{\frac{\partial^{m}\psi}{\partial\vs^{m}}}_{L^2_w(\REAL)},
    \label{eq:proj:Hermite:15}
  \end{align}
  which holds for any $\psi\in H^m_w(\REAL)$, $m\geq 0$;
  see~\cite[Theorem~6.2.6]{Funaro:1992}.  
  This can be also generalized when higher-order norms are present on
  the left-hand side (see \cite{Shen-Tang-Wang:2011}, Theorem 7.13,
  p.270):
  \begin{align}
    \norm{\psi-\mathcal{P}_H^{N_H}\psi}_{H^r_w(\REAL)}
    \leq C\NH^{(r-m)/2}\Norm{\frac{\partial^{m}\psi}{\partial\vs^{m}}}_{L^2_w(\REAL)},
    \label{eq:proj:Hermite:16}
  \end{align}
  Similar results hold in the case of Hermite functions where the
  weight is $w(v)=e^{v^2}$.
\end{itemize}
\par\smallskip

We are ready to provide an estimate to the projection operator for
functions of both variables $x$ and $v$.
We begin by observing that for $f\in L^2(\Omega )$ one has:
\begin{align}
  (I-\PN)\fs = (I-\mathcal{P}_F^{N_F})\fs 
	+\mathcal{P}_F^{N_F}(I-\mathcal{P}_S^{N_S})\fs  ,
  \label{eq:proj:global:error:00}
\end{align}
By virtue of this equality we get:
\begin{align}
  \norm{\fs-\PN\fs}_{L^2(\Omega)}
	&\leq \norm{\fs-\mathcal{P}_F^{N_F}\fs}_{L^2(\Omega)} +
  \norm{\mathcal{P}_F^{N_F}}_{\mathcal{L}\big(L^2(\Omega),L^2(\Omega)\big)}
	\norm{\fs-\mathcal{P}_S^{N_S}\fs}_{L^2(\Omega)}
  \nonumber\\
  &\leq \norm{\fs-\mathcal{P}_F^{N_F}\fs}_{L^2(\Omega)}+
	\norm{\fs-\mathcal{P}_S^{N_S}\fs}_{L^2(\Omega)}
  \label{eq:proj:global:error:05}
\end{align}
since the norm of the projector $\mathcal{P}_F^{N_F}$ in
$\mathscr{L}\big(L^2(\Omega),L^2(\Omega)\big)$ is less than one.

\par\smallskip

The bound of the last term can be specialized according to the method
adopted.
This gives the final estimates:
\begin{align}
  \norm{f -\PN\fs}_{L^2(\Omega)}
  \leq 
  \begin{cases}
    C_1 \NF^{-m_F} + C_2 \NL^{-m_S}    \quad \ \ m_F, m_S\geq 0 & \textit{(Leg.-Fou.)},\\[0.5em]
    C_1 \NF^{-m_F} + C_2 \NH^{-m_S/2}  \quad m_F, m_S\geq 0 & \textit{(Her.-Fou.)}.
  \end{cases}
\end{align}
In the Hermite case one has $\Ov=\REAL$.
As before, the constants $C_1$ and $C_2$ do not depend on the
discretization parameters.
They depend however on Sobolev type norms type norms of the given
function $f\in H^{m_F}(\Omega_x) \times H^{m_S}(\Omega_v)$.
When the Sobolev space on the left-hand side is $H^r(\Omega)$, with
$r\geq 1$, we have:
\begin{align}
  \norm{f -\PN\fs}_{H^r(\Omega)}
  \leq 
  \begin{cases}
    C_1 \NF^{r-m_F} + C_2 \NL^{2r-1/2-m_L}    \ m_F, m_L\geq r & \hspace{-.2cm} \textit{(Leg.-Fou.)},\\[0.5em]
    C_1 \NF^{r-m_F} + C_2 \NH^{(r-m_H)/2}   \quad m_F, m_H\geq r & \hspace{-.2cm}\textit{(Her.-Fou.)}.
  \end{cases}
\end{align}
A way to prove the above result is to differentiate the equality
in~\eqref{eq:proj:global:error:00} and note that the Fourier projector
commutes with derivatives.
Successively, one makes use of~\eqref{eq:proj:Lagrange:21}
and~\eqref{eq:proj:Hermite:15}.
Regarding these techniques, we refer to the original paper
\cite{Canuto-Quarteroni:1982} for more insight.

\section{Implementation}

\subsection{Spectral decomposition of the Vlasov equation}
\label{subsec:spectral:decomp:Vlasov}
The spectral decomposition of $\fs(x,\vs,t)$ on the space-velocity
domain $(x,\vs)\in\Ox\times\Ov$ and $t\in[0,T[$ reads as:
\begin{align}
  \fs(x,\vs,t) = \sum_{\nk\in\LL}\Csnk(t)\vphi_n(\vs)\eta_k(x).
  \label{eq:fs:decomposition}
\end{align}
By substituting~\eqref{eq:fs:decomposition}
in~\eqref{eq:Galerkin:formulation:A} we obtain the following infinite
non-linear system of ordinary differential equations for the
coefficients $\Csnk(t)$:
\begin{align}
  &\frac{d\Csnk}{\dt} 
  + \sum_{\nkp\in\LL}A_{\nk,\nkp}\Csnkp
  \nonumber\\ 
  &
  \phantom{\frac{d\Csnk}{\dt}}
  + \sum_{\nkp,\nkpp\in\LL}B_{\nk,\nkp,\nkpp}\Csnkp\Csnkpp = 0
  \quad\forall\nk\in\LL,
  \label{eq:ODE:exact:A}
\end{align}
with the initial conditions $\Csnk(0) = \Cs^{0}_{n,k}$, $\forall\nk\in\LL$,
which are obtained from the spectral expansion of the initial solution
$\fs_0(x,\vs)$.
This writing is not used in the theoretical analysis; it is important
however for the implementation of the algorithm, since the
coefficients $A_{\nk,\nkp}$ and $B_{\nk,\nkp,\nkpp}$ are the same as
those of the numerical approximation. We are now going to show how to
compute them.
The \emph{linear term} in~\eqref{eq:ODE:exact:A} is such that:
\begin{align}
  \sum_{\nkp\in\LL}A_{\nk,\nkp}\Csnkp
  = \int_{\Omega}\vphi_{n}\eta_{k}\,\vs\frac{\partial\fs}{\partial x}\,\dv\dx
\end{align}
with
\begin{align}
  A_{\nk,\nkp} 
  = \int_{\Omega}\vphi_{n}\eta_{k}\,\vs\frac{\partial}{\partial x}\big(\vphi_{n'}\eta_{k'}\big)\,\dv\dx.
  \label{eq:def:A}
\end{align}
Note that $A_{\nk,\nkp}$ does not depend on $t$ because such a
dependence is clearly expressed through the coefficient $\Csnkp(t)$.
The coefficient in~\eqref{eq:def:A} can be recovered using the
orthogonality properties of Hermite or Legendre polynomials, and in
particular by the respective three-term recursion
formulas~\cite{Delzanno:2015,Camporeale-Delzanno-Bergen-Moulton:2015,Manzini-Delzanno-Vencels-Markidis:2016}.
\par\smallskip

In a similar manner, the \emph{non-linear term}
in~\eqref{eq:ODE:exact:A} is such that:
\begin{align}
  \sum_{\nkp,\nkpp\in\LL}B_{\nk,\nkp,\nkpp}\Csnkp\Csnkpp
  = -\int_{\Omega}\vphi_{n}\eta_{k}\,\Es\frac{\partial\fs}{\partial v}\,\dv\dx.
  \label{eq:non-linear:term:B}
\end{align}
This time, to derive the expression of the coefficients $B_{\nk,\nkp,\nkpp}$ we
first need to write the electric field $\Es$ in terms of the coefficients
$\Csnk$ in~\eqref{eq:non-linear:term:B}.
For this purpose, we consider the decomposition:
\begin{align}
  \Es(x,t) = \sum_{\nk\in\LL}\Esh_{n,k}(x)\,\Csnk(t),
  \label{eq:def:E:fourier}
\end{align}
basically corresponding to the Fourier expansion of $\Es$ and that
will be discussed in the next subsection (in particular, $\Esh_{n,k}$
is given by formula~\eqref{eq:def:Eh:fourier}).
Combining~\eqref{eq:fs:decomposition} and~\eqref{eq:def:E:fourier},
from~\eqref{eq:non-linear:term:B} we find that:
\begin{align}
  B_{\nk,\nkp,\nkpp} = -\int_{\Omega}\vphi_{n}\eta_{k}\,\Esh_{n',k'}\frac{\partial}{\partial \vs}\big(\vphi_{n''}\eta_{k''}\big)\dv\dx.
  \label{eq:def:B}
\end{align}
As already noted for $A_{\nk,\nkp}$, also the coefficient
$B_{\nk,\nkp,\nkpp}$ does not depend on time.

\subsection{Spectral decomposition of the Poisson equation}
\label{subsec:spectral:decomp:Poisson}
Consider the Fourier decomposition of $\Es(x,t)$ on the spatial domain
$\Ox$ given by~\eqref{eq:Ek:alt:def}.
By substituting~\eqref{eq:fs:decomposition} and~\eqref{eq:Ek:alt:def}
in~\eqref{eq:Galerkin:formulation:B} we obtain:
\begin{align}
  \sum_{k\in\LF}(ik)\Es_k\eta_{k} = 
  \sqrt{2\pi}\eta_{0}
  -\sum_{\nk\in\LL}\Cs_{n,k}\eta_{k}\int_{\Ov}\vphi_{n}\dv.
  \label{eq:Ek:Fourier:00}
\end{align}
We multiply~\eqref{eq:Ek:Fourier:00} by $\eta_{-k'}$, integrate on
$\Ox$ and use the orthogonality property~\eqref{eq:Fourier:def} to
obtain (changing the summation index back to $k$):
\begin{align}
  \Es_k(t) = \sum_{n\in\LS}\gamma_{n,k}\Csnk(t)
  \quad\forall k\in\LF,
  \label{eq:Ek:def}
\end{align}
where for all $n\in\LS$ we take:
\begin{align}
  \gamma_{n,0}=0\qquad {\rm and }\qquad \gamma_{n,k} = \frac{i}{k}\int_{\Ov}\vphi_{n}(\vs)\dv
  \qquad k\in\LF\backslash\{0\}.
  \label{eq:gamma:def}
\end{align}

\noindent
From the orthogonality properties of the Symmetrically Weighted
Hermite functions it holds that $\gamma_{n,k}=0$ for every $k$ and odd
$n$.
Instead, for Legendre polynomials it holds that $\gamma_{n,k}=0$ for
every $n>0$.
From $\gamma_{n,0}=0$ for all $n$ it follows that $\Es_0(t)=0$, which
is equivalent to $\int_{\Ox}\Es(x,t)\dx=0$.

\par\smallskip

Using the definition of $\Es_{k}$ in~\eqref{eq:Ek:def}, by
comparing~\eqref{eq:Ek:alt:def} and~\eqref{eq:def:E:fourier} we
immediately find that:
\begin{align}
  \Esh_{n,k}(x) 
  = \gamma_{n,k}\eta_k(x)
  = \frac{i}{k}\bigg(\int_{\Ov}\vphi_{n}(\vs)\dv\bigg)\,\eta_{k}(x)
  \quad\,k\in\LF\backslash{\{0\}}.
  \label{eq:def:Eh:fourier}
\end{align}

\newcommand{\uvec}[1]{\underline{#1}}
\newcommand{\AN}{\mathcal{A}^{\NN}}
\newcommand{\BN}{\mathcal{B}^{\NN}}
\newcommand{\calLN}{\mathcal{L}^{\NN}}

\subsection{Global discretization}
Consider the \emph{approximated distribution function}:
\begin{align}
  \fNs(x,\vs,t) = \sum_{\nk\in\LN}\CNsnk(t)\vphi_n(\vs)\eta_k(x),
  \label{eq:fNs:def}
\end{align}
which approximates the function $\fs(x,\vs,t)$ in $\XNs$,
as well as the \emph{approximated electric field}:
\begin{align}
  \ENs(x,t) = \sum_{\nk\in\LN}\gamma_{n,k}\CNsnk(t)\eta_k(x),
  \label{eq:def:E:fourier:trunc}
\end{align}
which approximates $\Es(x,t)$ in $\FNs$.
Clearly, $\fNs$ does not coincide with projection $\PN\fs$ and $\ENs$
with projection $\PFN\Es$.
The coefficients $\CNsnk$ are determined by imposing that $\fNs$ and
$\ENs$ are the solution of the \emph{truncated Vlasov-Poisson system}
given
by~\eqref{eq:truncated:Vlasov:def}-\eqref{eq:truncated:Poisson:def}.
From~\eqref{eq:truncated:Vlasov:def}-\eqref{eq:truncated:Poisson:def},
a straightforward calculation yields that the coefficients $\CNsnk(t)$
are the solution of a system of ordinary differential equations,
namely:
\begin{align}
  &\frac{d\CNsnk}{\dt} 
  + \sum_{\nkp\in\LN}\hspace{-2mm}A_{\nk,\nkp}\CNsnkp 
  + \sum_{\nkp,\nkpp\in\LN}\hspace{-8mm}
  \Big(
    B_{\nk,\nkp,\nkpp}
  \nonumber\\[0.5em]
  &\qquad
    -\widetilde{B}_{\nk,\nkp,\nkpp}
  \Big)\CNsnkp\CNsnkpp 
  = 0
  \qquad\forall\nk\in\LN,
  \label{eq:ODE:trunc:A}
\end{align}
and at $t=0$ we set $\CNsnk(0) = \Cs^{0}_{n,k}$, $\forall\nk\in\LN$
using the same initial conditions of problem~\eqref{eq:ODE:exact:A}.
The coefficients $A$ and $B$ are the same as in \eqref{eq:def:A} and
\eqref{eq:def:B}, respectively.
Instead, the coefficients denoted by $\widetilde{B}$ are obtained from
integration of term $\RN$ and using the result of
Lemma~\ref{lemma:intg:gN:RN} to derive their explicit formula.
In fact, with the special choice 
$\gN(x,t)=\eta_{k}(x)\vphi_n(\vs)$ in~\eqref{eq:intg:gN:RN} we find that: 
\begin{align*}
  \int_{\Omega}\eta_{k}(x)\vphi_n(\vs)\RN(x,\vs,t)\dv\dx 
  &= -\frac{1}{2}\int_{\Ox}\ENs(x,t)\eta_{k}(x)
  \Big[
    \fNs(x,\vmax,t)\vphi_{n}(\vmax) 
  \nonumber\\[0.5em]
  &\hspace{1cm}    
    - \fNs(x,\vmin,t)\vphi_{n}(\vmin)
  \Big]\,\dx.
\end{align*}
Using the expansions of $\fNs(x,\vmax,t)$ and $\fNs(x,\vmin,t)$,
cf.~\eqref{eq:fs:decomposition}, and the expression of the electric
field shown in~\eqref{eq:def:E:fourier}, one gets:
\begin{align*}
  \int_{\Omega}\eta_{k}(x)\vphi_n(\vs)\RN(x,\vs,t)\dv\dx
  &= \sum_{\nkp,\nkpp}\hspace{-0.2cm}\CNsnkp(t)\CNsnkpp(t)\Big[
    \vphi_{n}(\vmax)\vphi_{n'}(\vmax) 
    \nonumber\\[0.5em] 
    &- \vphi_{n}(\vmin)\vphi_{n'}(\vmin)
  \Big]
  \int_{\Ox}\Esh_{\nkpp}(x)\eta_{k}(x)\eta_{k'}(x)\dx
  \nonumber\\[0.5em]
 & =  \sum_{\nkp,\nkpp}\hspace{-0.3cm} \CNsnkp\CNsnkpp\widetilde{B}_{\nk,\nkp,\nkpp},
\end{align*}
from which one finally recovers:
\begin{align}
  \widetilde{B}_{\nk,\nkp,\nkpp} 
  = \Big[
  \vphi_{n}(\vmax)\vphi_{n'}(\vmax) 
  - \vphi_{n}(\vmin)\vphi_{n'}(\vmin)
  \Big]
  \int_{\Ox}\Esh_{\nkpp}\eta_{k}\eta_{k'}\dx.
  \nonumber
\end{align}

Since coefficients $A_{\nk,\nkp}$, $B_{\nk,\nkp,\nkpp}$, and
$\widetilde{B}_{\nk,\nkp,\nkpp}$ do not depend on $t$, the
well-posedness of the present system of ordinary differential
equations follows from classical results.
Indeed, the forcing term is the sum of a linear and a quadratic part.
Such a term is locally Lipschitz, so that existence and uniqueness in
a suitable interval $[0,t_N[\subset[0,T[$ may be recovered through a
contraction theorem.
By the way, this result can be extended to the whole interval $[0,T[$
(and, possibly, to the entire semi-axis $[0, +\infty[$).
It is enough to recall equality~\eqref{eq:Fv:prop:4}, from which we
deduce that the quantity $\sum_{(n,k)\in\LN}\ABS{\CNsnk}^2$ is bounded
by a constant $\kappa^N$ that is independent of $t$.
This prevents the blow up of the solution in a finite time.
More in detail, we put equation~\eqref{eq:ODE:trunc:A} in vector form:
\begin{align}
  \frac{d\CNsnk}{dt} 
  + \mathcal{A}^{\NN}_{(n,k)}\uvec{C}^{\NN}
  + \mathcal{B}^{\NN}_{(n,k)}(\uvec{C}^{\NN},\uvec{C}^N) =0 
  \qquad\forall\nk\in\LN
\end{align}
where $\AN_{(n,k)}$ represents the linear part, $\BN_{(n,k)}$ the
quadratic one (including the penalty term
$\widetilde{B}_{\nk,\nkp,\nkpp}$), and vector
${\uvec{C}^{\NN}}=\{ \CNsnk \}_{\nk\in\LN}$ collects all the spectral modes.
We also define the closed subspace $\YNs$ of $\XNs$
consisting of all functions bounded in $L^2(\Omega )$ by a given
constant $\kappa^N$, independent of $t$.
For two vectors $\uvec{C}^{\NN}_1$ and $\uvec{C}^{\NN}_2$, it holds
that, for any $\nk\in\LN$, $\BN_{(n,k)}$ is Lipschitz in
$\YNs\times\YNs$ by virtue of the following inequalities:
\begin{align*}
  &\Abs{
    \BN_{(n,k)}(\uvec{C}^{\NN}_{1},\uvec{C}^{\NN}_{1})-
    \BN_{(n,k)}(\uvec{C}^{\NN}_{2},\uvec{C}^{\NN}_{2})}
  \\[0.5em]
  &\quad\leq
  \Abs{\BN_{(n,k)}(\uvec{C}^{\NN}_{1}-\uvec{C}^{\NN}_{2},\uvec{C}^{\NN}_{1})}+
  \Abs{\BN_{(n,k)}(\uvec{C}^{\NN}_{2},\uvec{C}^{\NN}_{1}-\uvec{C}^{\NN}_{2})}
  \leq
  \lambda^N\Vert\uvec{C}^{\NN}_1 -\uvec{C}^{\NN}_2\Vert_{\NN},
\end{align*}
where $\Vert\cdot\Vert_N$ denotes the classical norm in finite
dimension and $\lambda^N >0$ is the Lipschitz constant.
To obtain the above estimate, we used the Schwarz inequality and the
boundedness in $\YNs$, i.e., $\Vert\uvec{C}^{\NN}_i\Vert_N\leq\kappa^N$
for $i=1,2$.
By observing that the coefficients of $\BN_{\nk}$ do not depend on
time we obtain that the Lipschitz constant $\lambda^N$ also does not
depend on time.
We can actually fix $\lambda^N$ in such a way that it does not depend on
the indices $\nk$.
These last remarks allow for the prolongation of the discrete solution
to the additional interval $[t_N, 2t_N[$, which is of the same size of
the initial one.
Such a procedure can be repeated as many times is necessary to cover
the interval $[0, T[$ (and beyond).

\medskip
The solution of the system of ordinary differential equations is
infinitely times differentiable in space and time.
However the existence proof provided above is naturally posed in the
space $C^0(0,T;L^2(\Omega))$, which is where we prove the error
estimates.

\end{document}